\definecolor{myMaroon}{HTML}{720E0E}
\definecolor{myBlue}{HTML}{1A5276}
\definecolor{myDarkerBlue}{HTML}{154360}
\newcommandx{\unsure}[2][1=]{\todo[linecolor=red,backgroundcolor=red!80,bordercolor=red,#1]{#2}}
\newcommandx{\change}[2][1=]{\todo[linecolor=blue,backgroundcolor=blue!25,bordercolor=blue,#1]{#2}}
\newcommandx{\info}[2][1=]{\todo[linecolor=OliveGreen,backgroundcolor=OliveGreen!25,bordercolor=OliveGreen,#1]{#2}}
\newcommandx{\improve}[2][1=]{\todo[linecolor=Plum,backgroundcolor=Plum!25,bordercolor=Plum,#1]{#2}}
\newcommandx{\TODO}[2][1=]{\todo[linecolor=myMaroon,backgroundcolor=myMaroon!25,bordercolor=myMaroon,#1]{#2}}
\newcommandx{\TODOIN}[2][1=]{\todo[inline,linecolor=myMaroon,backgroundcolor=myMaroon!25,bordercolor=myMaroon,#1]{#2}}
\newcommandx{\unsurein}[2][1=]{\todo[inline,linecolor=red,backgroundcolor=red!85,bordercolor=myMaroon,#1]{#2}}
\numberwithin{equation}{section}
\newcommand{\ZZ}{\mathbb{Z}}
\newcommand{\slanted}[1]{\slshape{#1}}
\newcommand{\scaps}[1]{{\scshape #1}}
\newcommand{\bscaps}[1]{\textsc{\textbf{#1}}}
\newcommand{\bfm}{\textbf}
\newcommand{\mcal}{\mathcal}
\newcommand{\mbb}{\mathbb}
\newcommand{\mbs}{\boldsymbol}
\newcommand{\mand}{\; \text{and}\;}
\newcommand{\PP}{\mbb{P}}
\newcommand{\dcup}{\dot\cup}
\newcommand{\tA}{\tilde{A}}
\newcommand{\Pro}{\mbb{P}}
\newcommand{\Ex}{\mbb{E}}
\newcommand{\Naturals}{\mbb{N}}
\newcommand{\NN}{\mbb{N}}
\newcommand{\Reals}{\mbb{R}}
\newcommand{\sm}{\setminus}
\newcommand{\discup}{\dot{\cup}}
\newcommand{\cHHI}{\cH_I}
\newcommand{\cHHW}{\cH_W}
\newcommand{\ovI}{\overline{I}}
\newcommand{\ind}{\mathbbm{1}}
\def\rk{\mathrm{\mbs{rk}}\,}
\def\bpi{\boldsymbol{\pi}}
\def\bi{\boldsymbol{i}}
\def\br{\boldsymbol{r}}
\def\bH{\boldsymbol{H}}
\def\bfH{\boldsymbol{\fH}}
\let\eps=\varepsilon
\let\theta=\vartheta
\let\rho=\varrho
\let\sigma=\varsigma
\let\phi=\varphi
\def\QED{$\blacksquare$}
\def\inQED{$\square$}
\renewenvironment{proof}
{\vspace{1ex}\noindent{\slanted Proof.}\hspace{0.5em}}{\hfill \QED \vspace{1ex}}
\newenvironment{theorem}
{
\refstepcounter{equation} 
\vspace{-1ex}
\ \\
\noindent
\begin{it}
\noindent
\bscaps{Theorem~\theequation.}\hspace{-0.5ex}
}
{\end{it} \vspace{1ex}}
\newenvironment{lemma}
{
\refstepcounter{equation} 
\vspace{-1ex}
\ \\
\noindent
\begin{it}
\noindent
\bscaps{Lemma~\theequation.}\hspace{-0.5ex}
}
{\end{it} \vspace{1ex}}
\newenvironment{observation}
{
\refstepcounter{equation} 
\vspace{-1ex}
\ \\
\noindent
\begin{it}
\noindent
\bscaps{Observation~\theequation.}\hspace{-0.5ex}
}
{\end{it} \vspace{1ex}}
\newenvironment{corollary}
{
\refstepcounter{equation} 
\vspace{-1ex}
\ \\
\noindent
\begin{it}
\noindent
\bscaps{Corollary~\theequation.}\hspace{-0.5ex}
}
{\end{it} \vspace{1ex}}
\newenvironment{definition}
{
\refstepcounter{equation} 
\vspace{-1ex}
\ \\
\noindent
\begin{it}
\noindent
\bscaps{Definition~\theequation.}\hspace{-0.5ex}
}
{\end{it} \vspace{1ex}}
\newenvironment{conjecture}
{
\refstepcounter{equation} 
\vspace{-1ex}
\ \\
\noindent
\begin{it}
\noindent
\bscaps{Conjecture~\theequation.}\hspace{-0.5ex}
}
{\end{it} \vspace{1ex}}
\newenvironment{remark}
{
\refstepcounter{equation} 
\vspace{-1ex}
\ \\
\noindent
\bscaps{Remark~\theequation.}\hspace{-0.5ex}
}
{\vspace{1ex}}
\newcommand{\A}{\mcal{A}}
\newcommand{\C}{\mcal{C}}
\newcommand{\cC}{\mcal{C}}
\newcommand{\F}{\mcal{F}}
\newcommand{\cH}{\mcal{H}}
\renewcommand{\P}{\mcal{P}}
\newcommand{\cP}{\mcal{P}}
\newcommand{\cT}{\mcal{T}}
\newcommand{\W}{\mcal{W}}
\newcommand{\fH}{\mathfrak{H}}
\newcommand{\fU}{\mathfrak{U}}
\def\section{\@ifstar\unnumberedsection\numberedsection}
\def\numberedsection{\@ifnextchar[
  \numberedsectionwithtwoarguments\numberedsectionwithoneargument}
\def\unnumberedsection{\@ifnextchar[
  \unnumberedsectionwithtwoarguments\unnumberedsectionwithoneargument}
\def\numberedsectionwithoneargument#1{\numberedsectionwithtwoarguments[#1]{#1}}
\def\unnumberedsectionwithoneargument#1{\unnumberedsectionwithtwoarguments[#1]{#1}}
\def\numberedsectionwithtwoarguments[#1]#2{%
  \ifhmode\par\fi
  \removelastskip
  \vskip 1.7ex\goodbreak
  \refstepcounter{section}%
  \begingroup
  \noindent\leavevmode\Large\bfseries\scshape\normalsize
  \begin{center}\S \thesection.\ #2\end{center} 
  \endgroup
  \addcontentsline{toc}{section}{%
    \protect\numberline{\bfm{\thesection.}}%
    \hspace{2.5ex} #1}%
  }
\def\unnumberedsectionwithtwoarguments[#1]#2{%
  \ifhmode\par\fi
  \removelastskip
  \vskip 1.7ex\goodbreak
  \begingroup
  \noindent\leavevmode\Large\bfseries\scshape\centering 
  \begin{center} #2 \end{center} \par
  \endgroup
  \vskip 2ex\nobreak
  \addcontentsline{toc}{section}{%
    \hspace{1ex} #1}%
  }
\def\subsection{\@ifstar\unnumberedsubsection\numberedsubsection}
\def\numberedsubsection{\@ifnextchar[
  \numberedsubsectionwithtwoarguments\numberedsubsectionwithoneargument}
\def\unnumberedsubsection{\@ifnextchar[
  \unnumberedsubsectionwithtwoarguments\unnumberedsubsectionwithoneargument}
\def\numberedsubsectionwithoneargument#1{\numberedsubsectionwithtwoarguments[#1]{#1}}
\def\unnumberedsubsectionwithoneargument#1{\unnumberedsubsectionwithtwoarguments[#1]{#1}}
\def\numberedsubsectionwithtwoarguments[#1]#2{%
  \ifhmode\par\fi
  \removelastskip
  \vskip 1.7ex\goodbreak
  \refstepcounter{subsection}%
  \noindent
  \leavevmode
  \begingroup
  \bfseries\normalsize
  \noindent\S \thesubsection\ \bscaps{#2.}\ 
  \endgroup
  \addcontentsline{toc}{subsection}{%
    \hspace{2ex}\protect\numberline{\bfm{\thesubsection.}}%
    \hspace{1ex} #1}%
  }
\def\unnumberedsubsectionwithtwoarguments[#1]#2{%
  \ifhmode\par\fi
  \removelastskip
  \vskip 3ex\goodbreak
  \noindent
  \leavevmode
  \begingroup
  \bfseries\normalsize
  \begin{center}\bscaps{#2.} \end{center}
  \endgroup
  \addcontentsline{toc}{subsection}{%
    \hspace{1ex} #1}%
  }
\def\subsubsection{\@ifstar\unnumberedsubsubsection\numberedsubsubsection}
\def\numberedsubsubsection{\@ifnextchar[
  \numberedsubsubsectionwithtwoarguments\numberedsubsubsectionwithoneargument}
\def\unnumberedsubsubsection{\@ifnextchar[
  \unnumberedsubsubsectionwithtwoarguments\unnumberedsubsubsectionwithoneargument}
\def\numberedsubsubsectionwithoneargument#1{\numberedsubsubsectionwithtwoarguments[#1]{#1}}
\def\unnumberedsubsubsectionwithoneargument#1{\unnumberedsubsubsectionwithtwoarguments[#1]{#1}}
\def\numberedsubsubsectionwithtwoarguments[#1]#2{%
  \ifhmode\par\fi
  \removelastskip
  \vskip 3ex\goodbreak
  \refstepcounter{subsubsection}%
  \noindent
  \leavevmode
  \begingroup
  \bfseries
  \S \thesubsubsection\ \bscaps{#2.}\  
  \endgroup
  \addcontentsline{toc}{subsubsection}{%
    \hspace{2ex} \protect\numberline{\bfm{\thesubsubsection.}}%
    #1}%
  }
\def\unnumberedsubsubsectionwithtwoarguments[#1]#2{%
  \ifhmode\par\fi
  \removelastskip
  \vskip 3ex\goodbreak
  \noindent
  \leavevmode
  \begingroup
  \bfseries
  \bscaps{#2.}\  
  \endgroup
  \addcontentsline{toc}{subsubsection}{%
     #1}%
  }
\newcommand{\TPARA}[1]{\paragraph{\hspace{-0.4cm}\phantom{.}  \bfm{#1}} }
\def\namedlabel#1#2{\begingroup
    #2%
    \def\@currentlabel{#2}%
    \phantomsection\label{#1}\endgroup
}
\renewcommand*\env@matrix[1][*\c@MaxMatrixCols c]{%
  \hskip -\arraycolsep
  \let\@ifnextchar\new@ifnextchar
  \array{#1}}
\begin{document}

\title{\bscaps{An Asymmetric Random Rado Theorem: $1$-statement}}
\author{Elad Aigner-Horev} 
\address{Department of Mathematics and Computer Science, Ariel University, Ariel, Israel}
\email{horev@ariel.ac.il}

\author{Yury Person}
\address{Institut f\"ur Mathematik, Technische Universit\"at Ilmenau, 98684 Ilmenau, Germany}
\email{yury.person@tu-ilmenau.de}
\thanks{YP is supported by the Carl Zeiss Foundation.}

\date{\today}
\maketitle

\begin{abstract}
A classical result by Rado characterises the so-called partition-regular matrices $A$, i.e.\ those matrices $A$ for which any finite colouring of the positive integers yields a monochromatic solution to the equation $Ax=0$. We study the {\sl asymmetric} random Rado problem for the (binomial) random set $[n]_p$ in which one seeks to determine the threshold for the property that any $r$-colouring, $r \geq 2$, of the random set has a colour $i \in [r]$ admitting a solution for the matrical equation $A_i x = 0$, where $A_1,\ldots,A_r$ are predetermined partition-regular matrices pre-assigned to the colours involved.

We prove a $1$-statement for the asymmetric random Rado property. In the symmetric setting our result retrieves the $1$-statement of the {\sl symmetric} random Rado theorem established in a combination of results by R\"odl and Ruci\'nski~\cite{RR97} and by Friedgut, R\"odl and Schacht~\cite{FRS10}. We conjecture that our $1$-statement in fact unveils the threshold for the asymmetric random Rado property, yielding a counterpart to the so-called {\em Kohayakawa-Kreuter conjecture} concerning the threshold for the  asymmetric random Ramsey problem in graphs.

We deduce the aforementioned $1$-statement for the asymmetric random Rado property after establishing a broader result generalising the main theorem of Friedgut, R\"odl and Schacht from~\cite{FRS10}. The latter then serves as a combinatorial framework through which $1$-statements for Ramsey-type problems in random sets and (hyper)graphs alike can be established in the asymmetric setting following a relatively short combinatorial examination of certain hypergraphs. To establish this framework we utilise a recent approach put forth by Mousset, Nenadov and Samotij~\cite{MNS18} for the Kohayakawa-Kreuter conjecture. 
\end{abstract}

\section{Introduction}\label{sec:intro}

Locating the thresholds for various Ramsey properties of random structures has been of prime interest of late. After \L{}uczak, Ruci\'nski and Voigt~\cite{LRV91} launched the systematic study of these thresholds a great number of results followed. In a series of papers, R\"odl and Ruci\'nski~\cite{RR94,RR93,RR95} established a version for Ramsey's theorem\footnote{Ramsey's theorem asserts that for fixed positive integers $r$, $k$ and $\ell$ any $r$-colouring of $\binom{[n]}{k}$ with $n$ sufficiently large yields an $\ell$-element set $S\subset [n]$ with all sets from  $\binom{S}{k}$ being of the same colour.}~\cite{Ram30} in random graphs often referred to as  the {\em symmetric random Ramsey theorem}, where here the term 'symmetric' denotes that here the (hyper)graph sought to be found appearing monochromatically is the same across all colours; we use the term {\em asymmetric} if the configurations assigned to colours may differ. Ramsey properties of random hypergraphs were pursued in~\cite{CG16,FRS10,GNPSST17,NPSS17,NS16,RR98,RRS07}; asymmetric Ramsey properties of random graphs and hypergraphs were studied in~\cite{GNPSST17,KSS14,MSSS09,MNS18}. 

Ramsey theory also houses numerous problems seeking monochromatic configurations in the set of integers $[n]:=\{1,\ldots,n\}$; where here if to name a few one encounters for instance Schur's theorem\footnote{Schur's theorem asserts that in any finite colouring of $\NN$ there is always a monochromatic additive triple $(a,b,c)$ with $a+b=c$.}~\cite{Schur}; van der Waerden's theorem\footnote{van der Waerden's theorem asserts that any finite colouring of $\NN$ contains a monochromatic progression of any fixed length}~\cite{vdW27}. The reader can consult the book~\cite{GRS90} by Graham, Rothschild and Spencer for further such examples; in particular in what follows we devote much attention to a theorem by Rado that generalises the last two theorems. Such "Ramsey on the integers"-type problems were explored in the random setting as well~\cite{CG16,FRS10,GRR96,HST19,RR97,Sp17} and in fact for some of this problem sharp thresholds are known~\cite{FHPS16,FK00,FRRT06,SchSch18}.

An $\ell \times k$ matrix $A$ with integer entries is said to be {\em partition-regular} if any finite colouring of $\Naturals$ admits a monochromatic solution to the homogeneous matrical equation $A x = 0$. The matrical equation of Schur's theorem is the simple equation $x_1+x_2-x_3=0$; for van der Waerden's theorem  the system of linear equations consists of $x_1-2x_2+x_3=0$, $x_2-2x_3+x_4=0$,\ldots, $x_{k-2}-2x_{k-1}+x_k=0$. 
The characterisation of all partition-regular matrices is a classical result by Rado~\cite{Rado}, who showed that such matrices are captured through the so called  {\em columns condition} (see, e.g.,~\cite[Chapter~3]{GRS90} for details).  We would be remiss if we were not to remark that the matrix associated with van der Waerden's theorem is an example of what is commonly referred to as a {\em density-regular} system. 
  
A partition-regular matrix $A$ is said to be {\em irredundant} if the equation $Ax = 0$ has a solution $\begin{bmatrix}x_1\; \cdots \; x_k\end{bmatrix}^{\top}$ {\em non-repetitive} in the sense that $x_i \not= x_j$ for $1\leq i < j \leq k$; otherwise the matrix $A$ is said to be {\em redundant}. Every redundant matrix admits an $\ell' \times k'$ irredundant submatrix $A'$ with $\ell' < \ell$ and $k' < k$ such that the sets of solutions for the equations $Ax =0$ and $A'y=0$ are the same when viewed as sets (see e.g.,~\cite{FRS10,RR97} for details). Owing to this, one may restrict the discussion to irredundant partition-regular matrices, for which we may also assume full row rank. Consequently, we refer to irredundant partition-regular matrices of full row rank as {\em Rado matrices}. 

For a subset $X \subseteq [n]$, an integer $r \geq 1$, and a Rado matrix $A$, we write 
$
X \to (A)_r
$
in order to denote that every $r$-colouring of $X$ admits a monochromatic solution for the matrical equation $A x = 0$. The aforementioned result of Rado~\cite{Rado} coupled with a classical compactness argument (see, e.g.,~\cite{GRS90}) asserts that if $A$ is a Rado matrix then $[n] \to (A)_r$ for every sufficiently large $n$. 

For $p \in [0,1]$, let $[n]_p$ denote the binomial random subset of $[n]$ where members of $[n]$ are included independently at random each with probability $p$. Since  $X \to (A)_r$ is 
an increasing monotone property\footnote{If $X \to (A)_r$ then $Y \to (A)_r$ whenever $Y\supseteq X$.}, a {\sl threshold} for the property $[n]_p \to (A)_r$ exists by a result of Bollob\'as and Thomason~\cite{BT87}; i.e., there exists a function $\hat{p}\colon \NN\to[0,1]$ such that $\PP\left[[n]_p\to (A)_r\right]\longrightarrow 1$ whenever $p=\omega(\hat{p})$ (the \emph{$1$-statement}, hereafter), and such that $\PP\left[[n]_p\to (A)_r\right]\longrightarrow 0$ whenever for $p=o(\hat{p})$ (the \emph{$0$-statement}, hereafter). 

Graham, R\"odl and Ruci\'nski~\cite{GRR96} studied Schur's theorem for two colours in random sets and determined the threshold of this Ramsey property to be $n^{-1/2}$. R\"odl and Ruci\'nski~\cite{RR97} studied the Rado's theorem in random sets of integers; they determined the $0$-statement for the associated property and provided the $1$-statement for a special case of Rado matrices, namely the aforementioned {\sl density regular} matrices. 
The $1$-statement in its full generality was established later on by Friedgut, R\"odl and Schacht in~\cite{FRS10} and thus establishing the so called {\em symmetric random Rado theorem}. More recently, resilience versions of this problem were studied by Hancock, Staden and Treglown~\cite{HST19} and by Spiegel~\cite{Sp17}.

The following parameter introduced first in~\cite{RR97} arises in the threshold of the symmetric random Rado property. For an $\ell\times k$ Rado matrix $A$, set
\begin{equation}\label{eq:mA}
m(A) := \max_{\substack{W \discup \overline{W} = [k] \\ |W| \geq 2 }} \frac{|W|-1}
{|W|-1+\rk(A_{\overline{W}}) - \rk A},
\end{equation}
where here $\rk A$ denotes the rank of $A$ and for $I \subseteq [k]$ the term $A_I$ denotes the submatrix of $A$ obtained by restricting $A$ to the columns whose index lies in $I$. This parameter is well-defined~\cite{RR97}. 

\begin{theorem}\label{thm:FSR}{\em (Symmetric random Rado theorem~\cite[Theorem~3.1]{RR97},~\cite[Theorem~1.1]{FRS10})}\label{thm:random_Rado}\\
Let $A$ be a Rado matrix and let $r \in \Naturals$. There exist constants $0 < c < C$ such that the following holds
\[
\lim_{n \to \infty} \Pro \bigg[ [n]_{p(n)} \to (A)_r \bigg] =
\begin{cases}
0, & p(n) \leq cn^{-1/m(A)}\\
1, & p(n) \geq Cn^{-1/m(A)},
\end{cases}
\]
where $p\colon\Naturals\to[0,1]$.
\end{theorem}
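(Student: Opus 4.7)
The plan is to prove the $0$-statement and the $1$-statement separately, as they rely on fundamentally different techniques.

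For the $0$-statement, set $p = cn^{-1/m(A)}$ with $c$ small. I would follow the R\"odl-Ruci\'nski deletion-and-colouring approach. Fix a partition $W^* \discup \overline{W}^* = [k]$ attaining the maximum in \eqref{eq:mA}; the parameter $m(A)$ is engineered so that, at this density, the hypergraph $\cH$ whose vertex set is $[n]$ and whose edges are the $W^*$-restrictions of solutions to $Ax = 0$ is ``balanced'' in the sense that every subhypergraph has expected number of edges in $[n]_p$ comparable to the expected number of participating vertices. Using a two-round exposure $p = p_1 + p_2$, I would first reveal $[n]_{p_1}$ and argue that a.a.s.\ one can delete a small set $D$ whose removal leaves a subhypergraph with sufficiently spread-out edges to be $r$-coloured without monochromatic edges via the Lov\'asz Local Lemma. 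A first-moment argument then handles $D$ together with the additional $[n]_{p_2}$, producing a valid $r$-colouring of $[n]_p$.

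For the $1$-statement, set $p = Cn^{-1/m(A)}$ with $C$ large. I would adopt the hypergraph container framework of Mousset, Nenadov and Samotij, which the authors signal will underlie their broader result. Let $H$ be the $k$-uniform hypergraph on $[n]$ whose edges are the vertex-sets of solutions to $Ax = 0$, and view an $(A)_r$-free colouring of $[n]_p$ as a covering of $[n]_p$ by an $r$-tuple of independent sets of $H[n]_p$. The container lemma, tailored to the balancedness of $H$ encoded by $m(A)$, should yield a family $\F$ of ``almost independent'' containers with $\log |\F| = o(np)$ such that every independent set of $H[n]_p$ lies inside some $F \in \F$. By Rado's theorem $[n] \to (A)_r$ and a supersaturation strengthening, no $r$-tuple of containers from $\F$ can cover all of $[n]$: each such tuple must miss a linear-sized subset of $[n]$. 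A union bound then yields $\PP\bigl[[n]_p \not\to (A)_r\bigr] \leq |\F|^r \cdot (1-p)^{\Omega(n)}$, which tends to zero for $C$ sufficiently large.

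The main obstacle will be the $1$-statement for Rado matrices that are \emph{not} density-regular. For density-regular $A$ (van der Waerden's system being the archetype), any positive-density subset of $[n]$ already contains a solution, so both the supersaturation input and the container-size bound are immediate; this is precisely why R\"odl-Ruci\'nski handled that special case directly. For general partition-regular $A$ such as Schur's equation, dense $(A)$-free subsets of $[n]$ exist (e.g.\ the upper half of $[n]$ is sum-free), and containers cannot be controlled by density alone. One must instead run supersaturation at the multicolour Ramsey level, ruling out $r$-tuples of ``nearly Ramsey-admissible'' containers. Verifying the cluster/codegree hypotheses of the container lemma for the solution hypergraph $H$ — which in turn requires a careful use of the extremal partition $W^* \discup \overline{W}^*$ in \eqref{eq:mA} to control the distribution of solutions across layered subsystems — is the technical heart of the argument.
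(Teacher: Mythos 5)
First, note how the paper itself treats this statement: it is not proved from scratch here. The $0$-statement is simply quoted from R\"odl--Ruci\'nski~\cite{RR97}, and the $1$-statement is obtained as the special case $A_1=\cdots=A_r=A$ of Theorem~\ref{thm:Rado}, using $m(A,A)=m(A)$ (Observation~\ref{obs:const}); Theorem~\ref{thm:Rado} in turn is deduced from Theorem~\ref{thm:main}, whose proof runs the Saxton--Thomason containers for colours $2,\ldots,r$ together with the signature/witness bookkeeping and a Janson--Suen estimate for the remaining colour. Your $0$-statement sketch therefore cannot be checked against anything in the paper, and as written it is only a high-level outline of the RR97 argument; that is acceptable since the claim is cited, but it carries no proof content of its own.

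The genuine gap is in your $1$-statement. Your plan hinges on ``a family $\F$ of containers with $\log|\F|=o(np)$'' followed by the union bound $\PP\bigl[[n]_p\not\to(A)_r\bigr]\le|\F|^r(1-p)^{\Omega(n)}$. But the container theorem (Theorem~\ref{thm:containers}, property~\ref{itm:C3}) only gives $\log|\C|\le c\log(1/\eps)\,n\tau\log(1/\tau)$, and the codegree hypothesis $\delta(H,\tau)\le\eps/12k!$ forces $\tau=\Omega\bigl(n^{-1/m(A)}\bigr)$ for the solution hypergraph of $A$ (this is precisely what $m(A)$ encodes; see the containerability computation in Section~\ref{sec:Rado}). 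Hence $\log|\F|=\Theta\bigl(n^{1-1/m(A)}\log n\bigr)$ while $np=\Theta\bigl(n^{1-1/m(A)}\bigr)$, so $|\F|^r(1-p)^{\Omega(n)}$ does \emph{not} tend to zero at $p=Cn^{-1/m(A)}$; your argument only yields the $1$-statement for $p\ge Cn^{-1/m(A)}\log n$, which is exactly the $\log$-loss the paper attributes to~\cite{GNPSST17} and which the sparsification/fingerprint device of~\cite{MNS18} was designed to remove. To close the gap at the stated threshold you must not union-bound over containers but over the small \emph{signatures} $T\subseteq[n]_p$ that determine them, charging each choice with $\PP[T\subseteq[n]_p]=p^{|T|}$ (legitimate because the missed set $B$ is disjoint from the containers, hence from the signatures, giving independence); this is exactly the structure of the paper's proof of Theorem~\ref{thm:main}, where the sum runs over signature tuples $(T^{(2)},\ldots,T^{(r)})$ and witness sets $\tA$ weighted by $\PP[\tA\subseteq V_{n,q,w}]$. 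Your remaining ingredients are sound and match the paper: the ``linear-sized missed set'' comes from Ramsey-level supersaturation (Frankl--Graham--R\"odl, Theorem~\ref{thm:super}) plus a vertex-degree bound for the solution hypergraph, and you are right that density arguments alone cannot work for Rado matrices that are not density-regular.
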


Given $r \geq 2$ partition-regular matrices, namely $A_1,\ldots,A_r$, we write $X \to (A_1,\ldots,A_r)$ to denote that $X$ has the property that for any $r$-colouring of its elements there exists a colour $i \in [r]$ such that the matrical equation $A_i x = 0$ has a solution in colour $i$. In this case $X$ is said to have the {\em asymmetric} Rado property (w.r.t. the matrices $A_1,\ldots,A_r$). 

The asymmetric Rado property for $\Naturals$ and any $r$ Rado matrices can be deduced directly from the characterisation of partition-regular matrices due to Rado~\cite{Rado}. Indeed, given $A_1,\ldots,A_r$ Rado matrices, then the diagonal block matrix $B:=\mathrm{\mbs{diag}}(A_1,\ldots,A_r)$ is also partition-regular as it satisfies the columns condition of Rado~\cite{Rado} which is equivalent to partition-regularity. As such, in any finite colouring of $\Naturals$ the homogeneous matrical equation $B x =0$ has a monochromatic solution which can be further "decomposed" into $r$ monochromatic solutions for each equation $A_i y = 0$ -- this in fact exceeds the requirement in the asymmetric case. This observation does not yield however a good estimate on the threshold for the random set $[n]_p$ since the "density" $m(B)$ is much higher from what heuristics suggests. 

The only nontrivial result about the threshold for the random set $[n]_p\to (A_1,\ldots,A_r)$ in the literature is due to 
Hancock, Staden, and Treglown~\cite[Theorem~4.1]{HST19} who in fact considered the {\sl resilience} version of Theorem~\ref{thm:Rado} and were the first to study asymmetric Rado property in a random setting. 
They proved an upper bound of the form $Cn^{-1/m(A_1)}$ for the threshold, where $m(A_1)\ge m(A_i)$ for all $i\in[2,r]$. Again,  heuristics below suggests that this is far from the right threshold whenever $m(A_1)>m(A_2)\ge m(A_i)$ for $i\in[3,r]$. 

Our main result is the $1$-statement for what we conjecture to be the threshold for $[n]_p \to (A_1,\ldots,A_r)$. The following parameter arises in our result. 

\begin{definition}\label{def:mAB}
Let $A$ and $B$ be two Rado matrices, where $A$ is an $\ell_A \times k_A$-matrix and $B$ is an $\ell_B \times k_B$-matrix. Set
\[
m(A,B):=\max_{\substack{W\subseteq [k_A] \\ |W|\ge
2}}\frac{|W|}{|W|-\rk A+\rk(A_{\overline{W}}) -1+1/m(B)}.
\]
\end{definition}

\noindent
The parameter $m(A,B)$ is well-defined (see~\eqref{eq:luck}). 
Our main result reads as follows.

\begin{theorem}\label{thm:Rado}{\em (Main Result)}
Let $A_1, \ldots, A_r$ be $r$ Rado matrices satisfying 
$m(A_1) \geq m(A_2) \geq \cdots \geq m(A_r)$. Then there exists a constant $C >0$ such that
\[
\lim_{n \to \infty} \Pro \left[ [n]_{p} \to (A_1,\ldots,A_r) \right] = 1.
\]
whenever $p \ge  C n^{-1/m(A_1,A_2)}$.
\end{theorem}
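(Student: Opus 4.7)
The plan is to deduce Theorem~1.5 as a consequence of a broader combinatorial framework that generalises the main result of Friedgut--R\"odl--Schacht~\cite{FRS10} to the asymmetric multicolour setting, following the deletion/cascade strategy introduced by Mousset--Nenadov--Samotij~\cite{MNS18} for the Kohayakawa--Kreuter conjecture. First, I would encode the problem hypergraph-theoretically: for each Rado matrix $A_i$, let $\cH_i$ denote the hypergraph on vertex set $[n]$ whose edges are the supports of non-repetitive solutions to $A_i x=0$. Irredundancy of the $A_i$ ensures each $\cH_i$ is nonempty, and the property $[n]_p \to (A_1,\ldots,A_r)$ translates into the statement that every $r$-colouring of $[n]_p$ forces some colour class $i$ to contain a full edge of $\cH_i$.

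Next, I would establish the asymmetric framework itself: given $r$ hypergraphs satisfying appropriate density/balancedness conditions, $[n]_p$ a.a.s.\ enjoys the asymmetric Ramsey property above a threshold that, in the Rado specialisation, coincides with $n^{-1/m(A_1,A_2)}$. The proof proceeds by contradiction. Assume a \emph{bad} colouring $\chi$ of $[n]_p$ exists. Focusing on colour $1$ (the densest, since $m(A_1)$ dominates), one considers the many edges of $\cH_1$ that are \emph{near-monochromatic} in colour $1$, i.e.\ all but one vertex lie in colour $1$; each such free vertex must be recoloured $j\neq 1$ and simultaneously avoid completing a monochromatic solution to $A_j x=0$. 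Running this analysis over all $j\neq 1$, the worst cascade is dictated by the colour $j$ maximising $m(A_j)$, namely $j=2$, which is precisely what injects the asymmetric parameter $m(A_1,A_2)$ into the threshold.

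Concretely, I would deploy a two-round exposure. Set $q=\Theta(n^{-1/m(A_2)})$ and first reveal $[n]_q$; then expose the remaining elements until the density reaches $p=Cn^{-1/m(A_1,A_2)}$. On the first round one shows that a.a.s.\ sufficiently many "dangerous" $A_2$-configurations are already present to make recolouring free vertices to colour $2$ globally impossible, and analogously for other colours by monotonicity of the $m(A_j)$'s. On the second round, the resulting constraint system on colour $1$ is tight enough that the number of extendible bad colourings admits an MNS-type union-bound-with-deletion estimate, producing a factor of $o(1)$. The remaining task is to verify the framework's density hypotheses for the hypergraphs $\cH_i$: this reduces, via the columns condition and the full-row-rank assumption that defines Rado matrices, to the already-known well-definedness of $m(A_i)$ (from~\cite{RR97}) and of $m(A_i,A_j)$ (from the inequality labelled \eqref{eq:luck} in this paper), together with a short rank computation on submatrices $A_{\overline{W}}$.

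The principal obstacle will be the general asymmetric framework, which is where the bulk of the technical work lies. Two issues are delicate: obtaining a clean formulation of the density conditions so that the resulting threshold matches $n^{-1/m(A_1,A_2)}$ without slack, and adapting the MNS deletion argument --- originally tailored to two-colour graph Ramsey problems --- to accommodate $r\geq 2$ colours with genuinely different hypergraphs $\cH_i$ of differing uniformities (the $A_i$ need not have the same number of columns). Once the framework is in place, the reduction from asymmetric random Rado is a short structural check using partition-regularity, and Theorem~1.5 follows.
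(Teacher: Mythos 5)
Your top-level architecture agrees with the paper's: encode the non-repetitive solutions of $A_ix=0$ as hypergraphs on $[n]$, prove a general asymmetric analogue of the Friedgut--R\"odl--Schacht framework in the spirit of Mousset--Nenadov--Samotij, and then verify its hypotheses for the Rado solution hypergraphs. The genuine gap is in the core of the framework, which you describe only as a recolouring ``cascade'' over near-monochromatic edges plus a two-round exposure at $q=\Theta(n^{-1/m(A_2)})$ followed by an ``MNS-type union bound with deletion''. As stated this is not a workable mechanism, and it is precisely where the difficulty sits. The paper's Theorem~\ref{thm:main} instead applies the Saxton--Thomason container theorem (Theorem~\ref{thm:containers}) to the solution hypergraphs of colours $2,\ldots,r$ (hence the hypothesis $\delta(H^{(i)}_n,\tau)\le \eps/12t!$ with $\tau=\Theta(n^{-1/m(A_2)})$), and must then beat the union bound over container signatures; a naive union bound at $p=Cn^{-1/m(A_1,A_2)}$ is too expensive and is exactly what produced the extra $\log n$ factor in earlier work. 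The device that removes it --- never mentioned in your sketch --- is the weighted, sparsified random set $V_{n,p,w}$ of Definition~\ref{def:Vnpwk}, with a \emph{proper} weight function $w$ on projections, plus $K$-tameness, concentration of projected solution counts (Lemma~\ref{lem:concentration}), the witness-set bookkeeping of Observation~\ref{obs:witnesses}, and Suen/Janson's inequality (Lemma~\ref{lem:Janson1}). Your claim that the first round at density exactly $n^{-1/m(A_2)}$ makes recolouring into colour $2$ ``globally impossible'' is unsubstantiated; it is essentially the statement that the container-plus-sparsification machinery is designed to replace, not a step one can assume.

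The deduction step is also lighter than you suggest. Beyond well-definedness of $m(A_i)$ and $m(A_1,A_2)$, the paper needs: (i) a supersaturated Ramsey statement (Frankl--Graham--R\"odl, Theorem~\ref{thm:super}) to obtain $(r,\zeta)$-Ramseyness --- plain partition-regularity is not enough; (ii) two-sided counts of projected solutions, $|\cHHI|=\Theta\big(n^{|I|-\rk A+\rk A_{\overline{I}}}\big)$ (Corollary~\ref{cor:project}), and the projected degree bounds of Lemma~\ref{lem:deg}, which yield both tameness of $\cH^{(1)}_n$ and the container co-degree condition via~\eqref{eq:luck}; and (iii) the construction, by a compactness argument, of a proper weight function $w$ satisfying~\eqref{eq:w}, so that $\cH^{(1)}_n$ is $(p,w,\tau)$-properly bounded with $\tau=C'n^{-1/m(A_2)}$. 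Items (i) and (iii) are absent from your plan, and without (iii) in particular one cannot reach the threshold $n^{-1/m(A_1,A_2)}$ without slack.
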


A special case when the matrices $A_i$ describe arithmetic progressions (asymmetric van der Waerden theorem) was proved independently and simultaneously  by Zohar~\cite{Zohar}, see more information in the concluding remarks section, Section~\ref{sec:conclude}.

One can easily verify the  equality $m(A,A) = m(A)$ (see the proof of Observation~\ref{obs:const} in Section~\ref{sec:aux}), and therefore Theorem~\ref{thm:Rado} retrieves the $1$-statement of the symmetric random Rado theorem (see Theorem~\ref{thm:FSR}) when the matrices $A_1,\ldots,A_r$ coincide. We conjecture that the $n^{-1/m(A_1,A_2)}$ is in fact the threshold for the associated property $[n]_p\longrightarrow (A_1,\ldots, A_r)$. 

\begin{conjecture}\label{conj:AHP}
Let $A_1, \ldots, A_r$ be $r$ Rado matrices satisfying 
$m(A_1) \geq m(A_2) \geq \cdots \geq m(A_r)$. There exist constants $0 <c < C$ such that  the following holds
\[
\lim_{n \to \infty} \Pro \bigg[ [n]_{p} \to (A_1,\ldots,A_r) \bigg] =
\begin{cases}
1, & p \geq Cn^{-1/m(A_1,A_2)}\\
0, & p \leq cn^{-1/m(A_1,A_2)}.
\end{cases}
\]
\end{conjecture}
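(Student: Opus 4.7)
The plan is to follow the asymmetric-Ramsey strategy of Mousset, Nenadov, and Samotij~\cite{MNS18}, transplanted from graphs to the linear-equation setting of Rado matrices. I would view any witnessing colouring $\chi\colon[n]_p\to[r]$ as one in which the first colour class is independent in the hypergraph $\mathcal{H}_1$ whose hyperedges are the solutions to $A_1x=0$, while each remaining class $\chi^{-1}(i)$ with $i\ge 2$ is free of solutions to $A_ix=0$. The threshold $Cn^{-1/m(A_1,A_2)}$ marks the density at which these dual constraints become simultaneously unsatisfiable; this is the content of the $1$-statement. The $0$-statement of Conjecture~\ref{conj:AHP} should, by contrast, correspond to the density below which a matching good colouring can be constructed, presumably by an alteration argument that removes a sparse family of ``dangerous'' solutions and then greedily $r$-colours what remains.

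The engine for the $1$-statement would be an asymmetric generalisation of the Friedgut--R\"odl--Schacht theorem~\cite{FRS10}. Roughly, for a hypergraph $\mathcal{H}$ on $[n]$ equipped with an auxiliary family of ``extension'' structures, one would want: whp every $\mathcal{H}$-independent subset of $[n]_p$ of size comparable to $pn$ admits, outside a sublinear defect, a short combinatorial description in terms of a pre-registered family. The Mousset--Nenadov--Samotij template supplies the right proof scheme, namely an algorithmic book-keeping of dangerous solutions that transfers the problem into counting in a weighted hypergraph whose density is governed by an asymmetric extension exponent rather than by $m(\mathcal{H})$ itself.

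To apply this framework to $\mathcal{H}_1$ one carries out the ``short combinatorial examination of certain hypergraphs'' advertised in the abstract: for each column subset $W\subseteq[k_{A_1}]$ with $|W|\ge 2$, build an extension gadget that attaches to each unassigned variable of the partial $A_1$-solution indexed by $W$ a solution to $A_2y=0$ through that variable. The expected number of such gadgets on $[n]_p$ is controlled precisely by the exponent appearing in Definition~\ref{def:mAB} (with $A=A_1$, $B=A_2$), so the $W$-extension families are all abundant exactly when $p\ge Cn^{-1/m(A_1,A_2)}$. A pigeonhole over the $r-1$ colours $\chi^{-1}(i)$ with $i\ge 2$ reduces the general case to this two-matrix case, because the ordering $m(A_i)\le m(A_2)$ for $i\ge 2$ means that extensions through $A_i$-solutions are at least as plentiful as through $A_2$; the ``worst'' complementary colour is $A_2$.

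The hardest step will be the asymmetric Friedgut--R\"odl--Schacht framework itself. Concretely, one must argue that a random $\mathcal{H}_1$-independent subset of $[n]_p$, conditional on its interaction with a ``skeleton'' of partial $A_1$-solutions, still contains roughly the expected number of $A_2$-extensions. In graphs the Mousset--Nenadov--Samotij argument rests on hypergraph containers together with a robust sparse counting lemma; for linear equations the analogues would be the arithmetic removal lemma of Kr\'al'--Serra--Vena and the Conlon--Gowers / Schacht transference from dense to sparse extremal statements. Marrying these two tools while tracking the exponent $1/m(A_1,A_2)$ uniformly across all column sets $W$ attaining the maximum in Definition~\ref{def:mAB}, and in particular controlling the interplay between different $W$'s when $m(A_1,A_2)$ is attained at several places, is the principal technical obstacle.
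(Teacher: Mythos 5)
The statement you are proving is Conjecture~\ref{conj:AHP}, which the paper does \emph{not} prove: only the $1$-statement is established there (as Theorem~\ref{thm:Rado}, deduced from the container-based framework of Theorem~\ref{thm:main}), while the $0$-statement is left open and is known only in the special case of van der Waerden configurations by Zohar~\cite{Zohar}. This is exactly where your proposal has a genuine gap. Your treatment of the $0$-statement is a single sentence (``presumably by an alteration argument that removes a sparse family of dangerous solutions and then greedily $r$-colours what remains''), with no construction, no count of the dangerous solutions at density $cn^{-1/m(A_1,A_2)}$, and no argument that the greedy colouring can simultaneously avoid $A_1$-solutions in colour $1$ and $A_i$-solutions in colours $i\ge 2$. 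In the symmetric case already the $0$-statement of R\"odl and Ruci\'nski~\cite{RR97} requires a delicate structural analysis of the solution hypergraph near the threshold, and in the asymmetric graph setting the analogous $0$-statements (e.g.\ for cliques in the Kohayakawa--Kreuter problem) are the hard half; nothing in your sketch engages with this, so the proposal cannot be considered a proof of the conjecture.

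Concerning the $1$-statement, your outline is in the right spirit --- containers plus the sparsification idea of Mousset, Nenadov and Samotij~\cite{MNS18}, applied to the solution hypergraphs of $A_1x=0$ and $A_iy=0$ --- and this is indeed how the paper proceeds. But the concrete tools you name diverge from what is actually needed. The paper's engine is Theorem~\ref{thm:main}: Saxton--Thomason containers~\cite{ST15} applied to the hypergraphs $H^{(i)}_n$ for $i\ge 2$, a Suen/Janson-type inequality for the $\xi$-partite $A_1$-solutions surviving outside the containers, $K$-tameness and $(p,w,\tau)$-boundedness of the ordered solution hypergraph of $A_1$ (verified via elementary rank/projection counting, Lemmas~\ref{lem:project}--\ref{lem:deg} and Corollary~\ref{cor:project}), Ramsey supersaturation~\cite{FGR88} (or~\cite{HST19,KSV12}), and a compactness argument producing the proper weight function $w$ satisfying~\eqref{eq:w}. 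There is no Conlon--Gowers/Schacht transference step and no need for a sparse counting lemma conditional on the independent set; the ``interaction with a skeleton of partial $A_1$-solutions'' is handled instead by the witness sets $Z_v$ and the coverability bookkeeping in Section~\ref{sec:MNS}. Also, your pigeonhole reduction to the two-matrix case is not how multiple colours are handled: the paper applies containers to every colour $i\ge 2$ with a single $\tau=\Theta(n^{-1/m(A_2)})$ (valid since $m(A_2)\ge m(A_i)$) and uses $(r,\zeta)$-Ramseyness of the whole $r$-tuple, rather than reducing to $r=2$. So even for the half that is provable, your sketch would need these components made precise before it constitutes a proof.
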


Our main result, namely Theorem~\ref{thm:Rado}, arises quite naturally in  arithmetic Ramsey problems for randomly {\sl perturbed} dense sets of integers. 
That is, for $n$ sufficiently large, given a set $N \subseteq [n]$ with positive density the distribution $N \cup [n]_p$ is viewed as a random perturbation of $N$. The limiting behaviour of the {\sl symmetric} Rado property $N \cup [n]_p \to (A)_2$ is then of interest where $A$ is a prescribed Rado matrix. The study of {\sl symmetric} Ramsey properties of randomly perturbed dense graphs, namely $G \cup G(n,p)$ with $G$ a dense graph, originates with the work of Krivelevich, Sudakov and Tetali~\cite{KST}. Recently much progress has been attained by Das and Treglown~\cite{DT19} for the case of graphs. The $1$-statement of the Kohayakawa-Kreuter conjecture arises fairly naturally in this type of results for graphs and we forgo the details here. For the integers, much is less known. The authors~\cite{AHP} have established that $p = n^{-2/3}$ is the threshold for the densely perturbed set $N \cup [n]_p$ to admit the Schur property; yet no other result in this venue is currently known for any other Rado matrix. This is partly due to an asymmetric random Rado type theorem at the correct threshold being missing from the literature; an issue we conjecture to be mended here.

Our proof of Theorem~\ref{thm:Rado} builds upon the ideas of Mousset, Nenadov and Samotij~\cite{MNS18}. We in fact deduce Theorem~\ref{thm:Rado} from a broader result (namely Theorem~\ref{thm:main}) that provides a general {\sl combinatorial framework} for deducing $1$-statements for asymmetric random Ramsey-type results in random (hyper)graphs and sets alike. Theorem~\ref{thm:main} generalises a result of Friedgut, R\"odl and Schacht from~\cite{FRS10} who provide such a combinatorial framework for $1$-statements of symmetric random Ramsey-type problems. Our proof of Theorem~\ref{thm:main} relies on the {\em container method}~\cite{BMS15,ST15} and the clever sparsification "trick" from~\cite{MNS18}. We postpone the statement of Theorem~\ref{thm:main} until the next section. Roughly speaking though, given an asymmetric Ramsey-type problem in random integer sets or (hyper)graphs involving configurations, say $C_1,\ldots,C_r$, for which one seeks a $1$-statement, Theorem~\ref{thm:main} calls for the examination of certain combinatorial properties of the {\sl solution hypergraphs} associated with each of the configurations $\{C_i\}_{i \in [r]}$. That is, for configuration $C_i$ the {\sl solution} hypergraph associated with $C_i$ is the one whose edge set consist of all "copies"/"solutions" (of) $C_i$ in the {\sl complete} universe (i.e., $K_n$ or $[n]$). Theorem~\ref{thm:main} asserts that if these $r$ solution hypergraphs satisfy a short list of combinatorial properties then the desired $1$-statement for the associated asymmetric random Ramsey-type problem would follow. We will  make this precise in \S~\ref{sec:technical_thm}.

The intuition underlying the parameter $m(A,B)$ and its involvement in our result is as follows. Let $A$ be an $\ell\times k$ Rado matrix and 
let $H^{(A)}$ denote the $k$-tuples $x\in[n]^k$ forming solutions to $Ax=0$ and  by $H^{(A)}_I$ the set of tuples {\sl projected} to $I$-indexed coordinates for some $I\subseteq [k]$. The common {\sl rule of thumb} sort of speak for the location of the threshold in the symmetric setting arises from requiring that the expected number of solutions to $Ax=0$ captured by $[n]_p$ dominates the expected size of $[n]_p$. Often, this is not enough and one in fact must require that the expected number of {\sl projected} solutions to $Ax=0$, i.e.\ the set of the form $\{x_I\colon Ax=0\}$ for \emph{any} $\emptyset\neq I\subseteq [k]$, captured by $[n]_p$ dominates the expected size of $[n]_p$.  
This requirement is embodied in the maximisation seen in~\eqref{eq:mA}. 

The parameter $m(A,B)$ arises in a similar manner. For a sequence $p$ sufficiently "small", say, one would like to colour $[n]_p$ as to avoid, say, a red solution to $Ax=0$ and, say, a blue solution to $B y=0$. With $p$ set, the (expected) density of the set of solutions to $Ax=0$ in $[n]_p$ is $q:=\Theta(\min_{\emptyset\neq I\subseteq[k_A]} |H^{(A)}_I|p^{|I|}/n)$. Moreover, at least one element from each of the solutions to $Ax=0$ in $[n]_p$ should be coloured blue. In fact this set of blue elements can be thought of as being randomly distributed in some sense.  But if the `expected' number (which is of the order  at least $\min_{\emptyset\neq I\subseteq[k_B]} |H^{(B)}_I|q^{|I|}$) of projected solutions  to $By=0$ captured by this (random) set  exceeds $qn$, then it "should" be impossible to avoid a blue solution to $By=0$, and here one observes the similarity with the symmetric case. This intuitive explanation is of course quite crude; nevertheless, the parameter $m(A,B)$ can be seen to emerge in this way. 

The reader familiar with asymmetric Ramsey properties in random (hyper)graphs will undoubtedly draw parallels between the so-called {\em Kohayakawa-Kreuter conjecture} (see Conjecture~\ref{conj:KK} below) and our Conjecture~\ref{conj:AHP}; more specifically one cannot help but to compare $m(A,B)$ to the graph parameter arising for the threshold  in the Kohayakawa-Kreuter conjecture. 
For indeed, the intuition underlying the location of `most' thresholds in the Kohayakawa-Kreuter conjecture is as described above for the asymmetric random Rado problem. 

Ramsey's theorem~\cite{Ram30} asserts that for sufficiently large $n$ any colouring of the edges of the complete $r$-uniform hypergraph $K^{({r})}_n=([n],\binom{[n]}{r})$ with $k$ colours admits a monochromatic copy of $F$; this is captured concisely with the notation $K^{({r})}_n\longrightarrow (F)_k$. This generalises to the asymmetric case as to read $H\longrightarrow (F_1,\ldots,F_k)$ in a straightforward manner. The binomial random hypergraph $H^{({r})}(n,p)$ is defined by choosing each of the $\binom{n}{r}$ possible edges independently at random with probability $p$. If $r=2$ then this is the binomial random graph model commonly denoted by $G(n,p)$. For a nonempty $r$-uniform hypergraph $F$ the $m_r$-{\em density} of $F$ is given by $m_r(F):=\max_{F'\subseteq F, v(F')>r} d_r(F')$, where $d_r(F)=\frac{e(F)-1}{v(F)-r}$ if its number of edges $e(F)>1$, and $d_r(F)=1/r$ if $e(F)=1$ and $v(F)=r$. 

For a fixed $k\ge 2$, $F$ an $r$-uniform hypergraph and $p\ge Cn^{-1/m_r(F)}$ (for some absolute constant $C>0$) it does indeed hold that $\PP\left[H^{({r})}(n,{p})\longrightarrow (F)_k\right]\longrightarrow 1$ as $n\to\infty$. In \emph{many} cases $n^{-1/m_r(F)}$ is known to be the  \emph{threshold}; nevertheless there are exceptions.  R\"odl and Ruci\'nski~\cite{RR94,RR93,RR95} determined for every graph $F$ and any fixed number of colours the thresholds for the random graph $G(n,p)$. The case of random hypergraphs is not fully solved, but a general $1$-statement was given by Friedgut, R\"odl and Schacht~\cite{FRS10} and by Conlon and Gowers~\cite{CG16}  (in the strictly balanced case), the matching $0$-statement for cliques was provided by Nenadov, Person, \v{S}kori\'c and Steger~\cite{NPSS17}.  Gugelmann, Nenadov, Person, \v{S}kori\'c, Steger and Thomas~\cite{GNPSST17} also discovered another type of behaviour in random hypergraphs which is not exhibited in the random graph $G(n,p)$. 
   
Amongst the first to consider asymmetric Ramsey properties in random graphs were Kohayakawa and Kreuter~\cite{KK97} who studied the case of cycles and put forth a conjecture as to where to locate the thresholds. For graphs $F_1$ and $F_2$ with $m_2(F_1)\ge m_2(F_2)$ the 
   asymmetric \emph{$m_2$-density} of $F_1$ and $F_2$ is given by 
\[
 m_2(F_1,F_2):=\max_{F_1'\subseteq F_1, e(F'_1)\ge 1}\frac{e(F'_1)}{v(F'_1)-2+1/m_2(F_2)}.   
\]
The Kohaykawa-Kreuter conjecture is then as follows where here we take the version from~\cite{KSS14}.

\begin{conjecture}\label{conj:KK} {\em (The Kohayakawa-Kreuter conjecture)}\\
Let $F_1$, \ldots, $F_r$ be graphs with $m_2(F_1)\ge m_2(F_2)\ge \ldots\ge m_2(F_r)$ and $m_2(F_2)>1$. Then 
 the threshold for $G(n,p)\longrightarrow (F_1,\ldots,F_r)$ is $n^{-1/m_2(F_1,F_2)}$.
\end{conjecture}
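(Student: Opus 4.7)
The plan splits naturally along the 1-statement and 0-statement, since the conjecture is a threshold claim bundling both. I would first attack the 1-statement by mimicking the strategy of the present paper for its Rado analogue. The idea is to invoke the general combinatorial framework of Theorem~\ref{thm:main}: for each $F_i$, form the ``solution hypergraph'' $\cH^{(F_i)}$ on vertex set $E(K_n)$ whose hyperedges are the edge-sets of copies of $F_i$ in $K_n$, and verify that the family $\cH^{(F_1)},\ldots,\cH^{(F_r)}$ satisfies the degree and supersaturation hypotheses of Theorem~\ref{thm:main}. These hypotheses translate to Kruskal--Katona--style codegree bounds and standard $F_i$-count estimates in dense subgraphs of $K_n$, where $m_2(F_i)$ plays the role of $m(A_i)$ and $m_2(F_1,F_2)$ plays the role of $m(A_1,A_2)$. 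Combining the container method with the Mousset--Nenadov--Samotij sparsification ``trick''~\cite{MNS18}, as in the proof of Theorem~\ref{thm:Rado}, should then yield the upper bound $p\ge Cn^{-1/m_2(F_1,F_2)}$.

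For the 0-statement I would follow the classical deletion-based route. Set $p\le cn^{-1/m_2(F_1,F_2)}$ and aim to exhibit, with positive probability, an $r$-colouring of $E(G(n,p))$ avoiding a red $F_1$ and a blue $F_i$ for every $i\ge 2$. Pick a subgraph $F_1^\star\subseteq F_1$ realising the maximum in the definition of $m_2(F_1,F_2)$. In $G(n,p)$ colour each edge red \emph{unless} it lies in the edge-set of some copy of $F_1^\star$; the remaining ``blue'' edges should, by a first moment computation at the chosen $p$, form a random-like substructure of density roughly $p^{e(F_1^\star)}n^{v(F_1^\star)-2}$, precisely the scale at which a copy of $F_2$ is avoided in expectation. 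The delicate part is to turn this heuristic into a rigorous avoidance statement for every $F_2$-copy: one typically invokes a KŁR-type local structure lemma or a greedy covering argument to remove ``obstructing'' $F_2$-copies by recolouring few red edges without introducing any red $F_1$. Iterating for colours $3,\ldots,r$ using $F_2$ in place of $F_1^\star$ handles the fully asymmetric case.

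The main obstacle, I expect, will be the 0-statement: while the 1-statement should follow fairly mechanically from the combinatorial framework of this paper (given suitable degree/supersaturation estimates which are available for graphs), the 0-statement in full generality has resisted all attempts and is only established for restricted pairs (cycles~\cite{KK97}, cliques, strictly balanced pairs, and certain cases in~\cite{MNS18}). The crux is that removing an edge $e$ from every $F_2$-copy in $G(n,p)$ must be doable while preserving the constraint that no red copy of $F_1^\star$ survives; this requires a structural statement about $F_1^\star$, roughly that it contains an edge whose removal makes the count of $F_1^\star$-copies through any fixed edge suitably small, and such a statement is not known for arbitrary $F_1$. My proposal would therefore isolate a natural graph-theoretic property of $F_1^\star$ sufficient to drive the greedy removal, and verify it for as broad a class of graphs as possible, leaving the remaining cases as the genuine content of the conjecture.
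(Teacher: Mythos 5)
You were asked about Conjecture~\ref{conj:KK}, which the paper does not prove: it is quoted (in the formulation of~\cite{KSS14}) purely as context for Conjecture~\ref{conj:AHP}. The only claim the paper makes about it is the remark after Theorem~\ref{thm:main}, namely that the framework of Theorem~\ref{thm:main} recovers the \emph{$1$-statement} of the Kohayakawa--Kreuter conjecture, which had in any case just been proved in full generality by Mousset, Nenadov and Samotij~\cite{MNS18}; the paper also notes that the hypothesis $m_2(F_2)>1$ is only needed for the threshold ($0$-statement) part. So there is no proof in the paper against which your proposal can be matched, and a correct response to the task would have to begin by observing that the statement is an open conjecture at the level of generality stated.

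Read as a proof, your proposal therefore has a genuine gap, and it sits exactly where you flag it: the $0$-statement. The colouring heuristic (make an edge red unless it lies in a copy of a densest subgraph $F_1'\subseteq F_1$ realising $m_2(F_1,F_2)$) together with a first-moment count only explains why the exponent $-1/m_2(F_1,F_2)$ is the natural guess; converting it into an actual valid colouring for every graph pair is the open problem. The blue edge set is not binomial, the greedy recolouring step you invoke requires a structural property of $F_1'$ (an edge whose removal controls the copies through any fixed edge) that is not known in general, and no K\L{}R-type or container statement performs this removal for you; the known cases remain cycles~\cite{KK97}, cliques~\cite{MSSS09} and further restricted families. ``Isolate a sufficient property and verify it for a broad class'' is a research programme, not a proof of the conjecture as stated. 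Your $1$-statement route --- solution hypergraphs on ground set $E(K_n)$, with $m_2(F_1,F_2)$ playing the role of $m(A_1,A_2)$, fed into Theorem~\ref{thm:main} after checking Ramseyness, tameness, containerability and proper boundedness --- is consistent with the paper's remark and with~\cite{MNS18}, but note that even this is asserted rather than carried out: one must in particular construct the proper weight function $w$ (the analogue of~\eqref{eq:w}, i.e.\ of~\cite[Lemma~8]{MNS18}) and verify the codegree bounds on $\binom{n}{2}$ ground elements, none of which appears in the paper.
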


This conjecture has been studied in~\cite{KK97,KSS14,MSSS09}. Kohayakawa and Kreuter verified the conjecture for cycles, 
Marciniszyn, Skokan, Sp\"ohel and Steger~\cite{MSSS09} proved the $0$-statement for cliques and observed that the $1$-statement in the  strictly `balanced' case would follow from the so-called K\L{}R-conjecture (which was verified later in~\cite{BMS15,CGSS14,ST15}) and Kohayakawa, Schacht and Sp\"ohel~\cite{KSS14} proved the strictly-balanced case. The asymmetric hypergraph analogue was studied in~\cite{GNPSST17}, where the $1$-statement was proven for general graphs with an additional $\log n$-factor. 
In a recent paper Mousset, Nenadov and Samotij~\cite{MNS18} managed to erase this $\log n$-factor using a clever sparsification trick. The proof of~\cite{MNS18} (as well as of~\cite{GNPSST17}) makes use of the container method~\cite{BMS15,ST15}. 

Support for Conjecture~\ref{conj:AHP} can be found in the fact that our combinatorial framework for deducing $1$-statements for asymmetric random Ramsey-type results, namely Theorem~\ref{thm:main}, recovers the $1$-statements for graphs and hypergraphs at the threshold conjectured by the Kohayakawa-Kreuter conjecture and its extension to hypergraphs. We revisit this statement in the remarks following the statement of Theorem~\ref{thm:main}.

The organisation of the paper is as follows. In Section~\ref{sec:technical_thm} we describe our main technical result (Theorem~\ref{thm:main}). In Section~\ref{sec:MNS} we prove Theorem~\ref{thm:main} by following closely the approach of Mousset, Nenadov and Samotij~\cite{MNS18}. In Section~\ref{sec:aux} we provide combinatorial results about Rado matrices, in Section~\ref{sec:Rado} we deduce Theorem~\ref{thm:Rado} from Theorem~\ref{thm:main} and Section~\ref{sec:conclude} contains some concluding remarks.

\section{Main technical result}\label{sec:technical_thm}
The purpose of this section is to state our main technical result, namely Theorem~\ref{thm:main}, from which Theorem~\ref{thm:Rado} is deduced with relative ease. Theorem~\ref{thm:main} is proven using the container method~\cite{BMS15,ST15} and is stated in the spirit of the main result~\cite[Theorem~2.5]{FRS10}. 
 We begin with the statements of the combinatorial properties that the aforementioned solution hypergraphs are required to satisfy for Theorem~\ref{thm:main} to take effect. Roughly speaking these properties fall into four categories to which we refer to as: {\sl containerability}, {\sl Ramseyness}, {\sl tameness}, and {\sl boundedness}. In what follows we make these precise.
 
Throughout, a sequence of hypergraphs $\bH:=(H_n)_{n \in \Naturals}$ is said to have property $\P$ if $H_n$ has property $\P$ whenever $n$ is sufficiently large. We sometimes refer to $H_n$ as solution hypergraphs.
\vspace{1.5ex}

\noindent
\scaps{Containerability.} For some of the solution hypergraphs involved in the asymmetric random Ramsey-type problem we require that the container method can be applied to them. One can capture this using the following functions introduced in~\cite[Section~3.1]{ST15}.   

Let $H$ be a $k$-uniform $n$-vertex hypergraph with average vertex degree $d > 0$. For $2 \leq j \leq k$ and $v \in V(H)$ set
\[
\deg_H^{(j)}(v) : = \max \{\deg_H(T): v \in T \subseteq V(H) \mand |T|= j\}, 
\]
where $\deg_H(T)$ is the number of edges of $H$ that contain $T$.
For $\tau > 0$ one defines 
\[
\delta_j(H,\tau) := \frac{\sum_{v\in V(H)} \deg_H^{(j)}(v)}{\tau^{j-1}nd},
\]
and 
\[
\delta(H,\tau):= 2^{\binom{k}{2}-1}\sum_{j=2}^k 2^{-\binom{j-1}{2}}\delta_j(H,\tau),
\]
which is the \emph{co-degree function} from~\cite[Section~3.1]{ST15}.
\vspace{1.5ex}

\noindent
\scaps{Tameness.} Like containerability the next property also involves degrees of the associated solution hypergraphs. However, tameness can be seen to be more intimately related to the configuration at hand. For indeed this property calls for the {\sl extendability} of so-called {\sl sub-solutions} into complete solutions to be controllable to a certain extent.

An {\sl ordered} ($k$-uniform) hypergraph $\cH: = (H,\bpi)$ is a pair comprised of an $n$-vertex $k$-uniform hypergraph $H$ and a set of bijections $\bpi$, 
where each element $\pi\in\bpi$, which may be viewed as a $k$-tuple,  is a bijection from $[k]$ to some edge $e\in E(H)$ denoted $e_{\pi}$, i.e.\ $e_\pi=\pi([k])$. We thus view elements of $\bpi$ as \emph{ordered edges} of $\cH$. We also write $e$, $f\in\cH$ for such ordered edges, and notations $e\subseteq A$ or $e\cap f$ mean that we view $e$ and $f$ as sets $e([k_1])$ and $f([k_1])$ (by dropping the order).  
We write $|\cH|$  for $|\bpi|$, i.e.\ the number of ordered edges in $\cH$, and we also identify $\cH$ with $\bpi$.

To put this in context of, say, the Rado problem, the edges of such ordered hypergraphs $\cH$ will arise later in Section~\ref{sec:Rado} as solution vectors $x$ with distinct entries to the equations of the form $Ax=0$, where $A$ is some Rado matrix and $x\in [n]^k$. The r\^ole of the bijections $\pi_e$ from $\bpi$ is to record the positions of the elements of $e$ as these are to be placed into the solution vector $x$.

For $\emptyset \not= I \subseteq [k]$ and $\pi \in \bpi$, we write $\pi|_I$ for the restriction of $\pi$ to $I$. 
The $I$-{\em projection} of $\cH$ is defined to be 
\[
\cH_I: = \left(H_I,\bpi_I\right)\text{ where } H_I:=(V(H),\{\pi(I)\colon \pi\in \bpi\})\text{ and } \bpi_I:=\left\{\pi|_I: \pi\in\bpi \right\}.
\]
In particular $\cH_{[k]}$ coincides with $\cH$. Given $e$, $f\in\cH$ and $u\in\cH_I$, we write $e\cap f=u$ if $e|_I=u=f|_I$ and $e([k_1])\cap f([k_1])=u(I)$. 

Observe that for $I \subset W \subseteq [k]$ the $I$-projection of $\cH_W$ coincides with the $I$-projection of $\cH$, and that several edges of $\cH$ may indeed be projected onto the same edge of an $I$-projection. 
For $\emptyset \not= I\subseteq [k]$ and (an ordered edge) $y\in \cH_I$ write 
$
\deg_{\cH}(y) := |\{\pi\in \bpi:\pi|_I=y\}|
$
to denote the \emph{degree} of $y$ in $\cH=(H,\bpi)$.

The following definition captures a setting in which the degrees of projected edges are not much larger than the average.

\begin{definition}
Let $K \in \Naturals$. An ordered  hypergraph $\cH=(H,\bpi)$ is said to be $K$-{\em tamed} if 
for all $\emptyset \not= I \subset W \subseteq [k]$ 
\begin{equation}\label{eq:extend}
\deg_{\cH_W}(u) \leq K \frac{|\cH_W|}{|\cH_I|}
\end{equation}
holds for all $u \in\bpi_I$. 
\end{definition}

For future reference let us note that $K$-tamed hypergraphs $\cH$ have the property that whenever $\emptyset \not= I \subset W \subseteq [k]$ it holds that:
\begin{equation}\label{eq:cherry}
\sum_{u \in \cH_I} \deg_{\cH_W}(u)^2 \le K^2 \frac{|\cH_W|^2}{|\cH_I|}.
\end{equation}
In particular when $W = [k]$ the condition becomes 
\begin{equation}\label{eq:traditional-boundedness}
	\sum_{u \in \cH_I} \deg_{\cH}(u)^2 \le K^2 \frac{|\cH|^2}{|\cH_I|}.
\end{equation} 

Later (in Section~\ref{sec:Rado}), we will use $K$-tameness of an ordered $k$-uniform hypergraph $\cH=(H,\bpi)$ to get bounds 
on the co-degree function $\delta_j(H,\tau)$ by using $\deg^{(j)}_H (v) \le \sum_{I \in \binom{[k]}{j}} \Delta^{I}(\cH)$, where $\Delta^{I}(\cH)$ is the maximum over all 
$\deg_{\cH}(u)$ with $u\in\cH_I$. 

\vspace{1.5ex}

\noindent
\scaps{Ramseyness}. Another combinatorial property that we shall require is {\em Ramsey supersaturation} that is fit to the asymmetric setting.
Given (possibly ordered) hypergraphs $H_1$, \ldots, $H_r$, then $(H_i\colon i\in[r])$ is said to be $r$-{\em Ramsey} if for any vertex partition $U_1 \discup \cdots \discup U_r$ of $V(H)$ there exists an $i \in [r]$ such that $e(H_i[U_i]) > 0$. Observe that for $H_1=\ldots=H_r$ this reduces to the symmetric setting. 

We will be working with the following quantitative version of the Ramsey property.
Given $r \in \Naturals$ and $i \in [r]$, let $(H_n^{(i)})_{n\in \Naturals}$ be a sequence of $k_i$-uniform (possibly ordered) hypergraphs with the property that for every $n \in \Naturals$ the hypergraphs $(H_n^{(i)})_{i \in [r]}$ are all defined on the common vertex set $[n]$. 

Put $\fH_n:=\left((H_n^{(i)})\colon i \in [r]\right)$ and $\bfH:=\left((H_n^{(i)})\colon i \in [r]\right)_{n\in\Naturals}$. 
The sequence $\bfH$ is said to be $(r,\zeta)$-{\em Ramsey} if for every sufficiently large $n$ and for any vertex partition $U_1 \discup U_2 \discup \cdots \discup U_r$ of $[n]$ there exists an $i\in [r]$ such that $e(H^{(i)}_n[U_i]) > \zeta e(H^{(i)}_n)$. 
\vspace{1.5ex}

\noindent
\scaps{Boundedness.} Most technical of all properties is that of {\sl boundedness} and it is here that we encounter the sparsification trick of~\cite{MNS18}. Roughly speaking the property essentially calls for a {\sl weight} function to be put on elements of the random set and thus {\sl sparsifying} it as to have that after sparsification the expected number of $I$-projected solutions for every nontrivial subset of indicies $I$ be asymptotically comparable with the size of the containers employed through the containerability property.

\begin{definition}\label{def:bounded}{\em ($(p,w,\tau)$-boundedness)}
Let $2 \leq k \in \Naturals$, let $p\colon\Naturals\to(0,1]$, $w:\P\left([k]\right) \to [1,\infty)$ and $\tau := \tau(n)\colon \NN\to (0,1)$ be functions, where additionally $\tau n \to \infty$ with $n$.  
A sequence $(\cH_n)_{n \in \Naturals}$ of ordered $k$-uniform hypergraphs is said to be $(p,w,\tau)$-{\em bounded} if 
	\begin{equation}\label{eq:bounded}
	\min_{\emptyset \not= I \subseteq [k]} p^{w(I)}|(\cH_n)_I| = \Theta(\tau n)
	\end{equation}
holds for all sufficiently large $n$, i.e.\ there exist absolute constants $c',C'>0$  with 
\[
c'\tau n\le \min_{\emptyset \not= I \subseteq [k]} p^{w(I)}|(\cH_n)_I|\le C'\tau n
\]
for all large $n$.
\end{definition}

\noindent
In the context of~\eqref{eq:bounded}, a set $W \subseteq [k]$ satisfying
\[
p^{w(W)}|(\cH_n)_W| = \min_{\emptyset \not=  I \subseteq [k]} p^{w(I)}|(\cH_n)_I|
\]
is called a $w$-{\em minimiser set}. If the associated function $w$ has the property that for every $ i\in [k]$ there exists a $w$-minimiser set containing $i$ then $w$ is called {\em proper}. We write $\W(w)$ for the set of $w$-minimisers whenever $w$ is proper. A sequence of ordered hypergraphs that is $(p,w,\tau)$-bounded with $w$ being proper is said to be $(p,w,\tau)$-{\em properly bounded}.
\vspace{2ex}

We are now ready to state our main technical result. 
 In the context of the asymmetric random Rado problem, say, this result conveys the message that upon collating all solution hypergraphs for all matrical equations involved in the problem into a single (ordered not necessarily uniform) hypergraph, then if the latter satisfies the above four combinatorial properties (namely containerability, Ramseyness, tameness, and boundedness) then a.a.s. this hypergraph, once restricted to the elements/vertices chosen by the random set, will support the desired Rado property. While true in spirit (and certainly in the symmetric case), the asymmetric nature of the Ramsey-type properties we are after 
renders the above description slightly inaccurate in the sense that asymmetry will require the satisfaction of the above combinatorial properties in a manner not as 
{\sl homogeneous} as described. This we now make precise; to that end will prefer to write $V_{n,p}$ in order to denote the binomial random set $[n]_p$. 

\begin{theorem}\label{thm:main}{\em (Main technical result)}
Let $2 \leq r, k_1,\ldots, k_r \in \Naturals$,  and let $\zeta , K, \eps>0$ such that $\eps < 1/2$ and $\zeta > t! \eps$, where $t:= \max_{i \in [2,r]}k_i$. For $i \in [r]$, let $(\cH_n^{(i)})_{n\in \Naturals}$ be a sequence of $k_i$-uniform ordered hypergraphs such that the hypergraphs $\{H_n^{(1)},\ldots,H_n^{(r)}\}$ are all defined on the same vertex set $[n]$ for every $n \in \Naturals$.  There exists a $C>0$ such that the following holds for every $p\colon\Naturals\to (0,1]$ 
 satisfying $p(n)\longrightarrow 0$ as $n\to\infty$.  

If  $\bfH=\left((\cH_n^{(i)})\colon i \in [r]\right)_{n\in\Naturals}$ is $(r,\zeta)$-Ramsey, 
$\cH_n^{(1)}$ is $K$-tamed and $(p,w,\tau)$-properly bounded, and $\delta(H^{(i)}_n,\tau) \leq \eps/12 t!$ holds for all $i=2$,\ldots, $r$, then a.a.s.  $(\cH^{(i)}_n[ V_{n,q}]\colon i\in[r])$ is $r$-Ramsey whenever $q:= q(n) \geq C p(n)$. 
\end{theorem}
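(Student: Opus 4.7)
The plan is to adapt the container-plus-sparsification strategy of Mousset--Nenadov--Samotij~\cite{MNS18} to the asymmetric ordered-hypergraph setting here. Assume toward contradiction a ``bad'' colouring $c\colon V_{n,q}\to[r]$ avoiding a colour-$i$ monochromatic $\cH^{(i)}_n$-edge for every $i$; then for $i\ge 2$, $c^{-1}(i)$ is independent in $\cH^{(i)}_n$. The first step is to invoke the hypergraph container theorem~\cite{BMS15,ST15} on each $\cH^{(i)}_n$ for $i\in[2,r]$: the hypothesis $\delta(H^{(i)}_n,\tau)\le\eps/(12t!)$ is exactly the co-degree condition needed. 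This produces container families $\cC_i$ of size $\log|\cC_i|=O(\tau n\log(1/\tau))$ with each $C\in\cC_i$ satisfying $e(H^{(i)}_n[C])\le\eps\cdot e(H^{(i)}_n)$, and every independent set of $\cH^{(i)}_n$ lying in some such $C$. Hence every bad $c$ is witnessed by some tuple $(C_2,\ldots,C_r)\in\cC_2\times\cdots\times\cC_r$ with $c^{-1}(i)\subseteq C_i$ for all $i\ge 2$.

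For each fixed tuple, let $T=[n]\setminus\bigcup_{i\ge 2}C_i$ and partition $[n]=U_1\sqcup\cdots\sqcup U_r$ with $U_1=T$ and $U_i\subseteq C_i$ for $i\ge 2$ (assigning each vertex to the smallest-index container containing it). By $(r,\zeta)$-Ramseyness some $U_i$ hosts more than $\zeta\cdot e(H^{(i)}_n)$ edges of $H^{(i)}_n$; since $U_i\subseteq C_i$ for $i\ge 2$ yields at most $\eps\cdot e(H^{(i)}_n)<\zeta\cdot e(H^{(i)}_n)$ such edges, the winning colour must be $i=1$. The factor $t!$ in the hypothesis $\zeta>t!\eps$ is the slack that handles bookkeeping between ordered and unordered edge counts when translating $e(H^{(1)}_n[T])>\zeta\cdot e(H^{(1)}_n)$ into a linear-in-$|\cH^{(1)}_n|$ lower bound on the number of \emph{ordered} $\cH^{(1)}_n$-edges sitting entirely in $T$. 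For $c$ to persist as bad, none of these ordered edges may be entirely inside $V_{n,q}$.

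The remaining task is to bound this no-edge probability for each tuple and then union-bound over the $\exp(O(\tau n\log(1/\tau)))$ tuples. Here sparsification is essential: a direct Janson's-inequality bound on the full random set $V_{n,q}$ loses a $\log(1/\tau)$-factor and does not close against the container count. Using that $w$ is proper, for each coordinate $i\in[k_1]$ choose a $w$-minimiser $W_i\ni i$ and realise $V_{n,q}$ as the union of (nearly) independent components $V^{(1)},\ldots,V^{(k_1)}\subseteq[n]$, each $V^{(i)}$ a random subset of $[n]$ at a probability tuned by $p^{w(W_i)}$ and coupled so that $V_{n,q}\supseteq V^{(i)}$ for every $i$; then count only ordered edges $\pi\in\cH^{(1)}_n$ with $\pi(i)\in V^{(i)}$ for every $i$. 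The $(p,w,\tau)$-proper-boundedness then yields expectation $\mu=\Theta(\tau n)$, driven by the bottleneck projection on each $W_i$, while the $K$-tameness of $\cH^{(1)}_n$, via~\eqref{eq:cherry} applied at each pair $I\subset W\subseteq[k_1]$, controls the second-moment correlation $\Delta$ by estimating pairs $\pi,\pi'$ meeting at a common projection $u\in(\cH^{(1)}_n)_I$. The upshot is $\mu^2/\Delta=\Omega(\tau n\log(1/\tau))$, giving a per-tuple failure probability of $\exp(-\Omega(\tau n\log(1/\tau)))$, which absorbs the container count once the constant $C$ in $q\ge Cp$ is taken sufficiently large.

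The main obstacle will be this sparsification-and-second-moment step: designing the coupling between $V_{n,q}$ and the components $V^{(1)},\ldots,V^{(k_1)}$, and accounting for the correlation contributions of \emph{every} projection $I\subset W\subseteq[k_1]$ simultaneously via tameness, is the technical heart. All four combinatorial hypotheses (containerability, Ramseyness, tameness, proper-boundedness) are calibrated precisely so that the argument closes; in the symmetric specialisation ($r=1$ copy of a single ordered $\cH_n$) the proof scheme recovers the main theorem of Friedgut--R\"odl--Schacht~\cite[Theorem~2.5]{FRS10}, where the sparsification is less critical but still simplifies the bookkeeping.
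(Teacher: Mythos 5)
There is a genuine gap at the decisive step, namely how the container enumeration is beaten. Your plan is: per container tuple, apply a sparsified Janson bound with failure probability $\exp(-\Omega(\tau n\log(1/\tau)))$, claiming $\mu^2/\Delta=\Omega(\tau n\log(1/\tau))$, and then union-bound over the $\exp(O(\tau n\log(1/\tau)))$ tuples. That second-moment claim does not follow from the hypotheses and is false in general: by $(p,w,\tau)$-boundedness, the bottleneck projection satisfies $\min_{\emptyset\neq I\subseteq[k_1]}q^{w(I)}|(\cH^{(1)}_n)_I|=\Theta\bigl(C^{O(1)}\tau n\bigr)$ when $q=Cp$, and tameness gives exactly $\mu^2/\Delta=O\bigl(\min_I q^{w(I)}|(\cH^{(1)}_n)_I|\bigr)$ as the exponent in Suen/Janson (this is the content of Lemma~\ref{lem:Janson1}); sparsifying to the $w$-minimiser components changes nothing here, since the exponent is governed by the worst projection, not by the full-edge count. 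With $C$ a fixed constant and $\tau\to 0$, a per-tuple bound $\exp(-\Omega_C(\tau n))$ cannot absorb $\exp(\Theta(\tau n\log(1/\tau)))$ container tuples; your route would only recover the weaker $1$-statement with an extra logarithmic factor in $q$ (as in the earlier work preceding~\cite{MNS18}).

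The paper closes this differently, and this is precisely the MNS sparsification trick: one never union-bounds over containers. Instead, since the signatures $T^{(2)},\ldots,T^{(r)}$ of Theorem~\ref{thm:containers} lie \emph{inside} the random set, each signature element $v$ is replaced by a witness set $Z_v=\xi^{-1}(W_v)$ inside a $\xi$-partite edge, with $\xi(Z_v)$ a $w$-minimiser (Observation~\ref{obs:witnesses}), and one sums, over all possible witness configurations $\tA$, the product $\Pro[\tA\subseteq V_{n,q,w}]\cdot\Pro[\,e_\xi(\cH^{(1)}_n[V_{n,q,w}\setminus(\text{containers}\cup\tA)])=0\,]$; deleting $\tA$ from the no-edge event makes the two factors independent. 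The expected number of $k$-coverable witness sets is then bounded by $\bigl(\sum_{W\in\W}(2K)^{2|W|}q^{w(W)}|(\cH^{(1)}_n)_W|\bigr)^k/k!$ using tameness together with the concentration statement of Lemma~\ref{lem:concentration}, so the weighted "enumeration cost" is only $\exp(O(\tau n\log C))$ rather than $\exp(O(\tau n\log(1/\tau)))$; the per-configuration Janson bound $\exp(-\Omega(C\tau n))$ then wins once $C$ is large. This weighted union bound over witness configurations — not an improved per-tuple exponent — is the missing idea in your proposal; your container and Ramseyness steps (including the role of $\zeta>t!\eps$, which forces the surviving colour to be $1$ after passing from unordered to ordered edges for $i\ge 2$) are essentially the paper's.
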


\begin{remark}
The condition $p(n)\longrightarrow 0$ is not very restrictive (and, in fact, it can be omitted at the expense of choosing $c'$ in~\eqref{eq:bounded} appropriately). Moreover, in applications one is concerned with sparse cases anyway. Since Ramsey properties are monotone, the truth of the statement for $p(n)\longrightarrow 0$ implies it for larger probabilities as well.
\end{remark}

\begin{remark} 
Our Theorem~\ref{thm:main} is stated in the spirit of the main result for symmetric Ramsey problems of Friedgut, R\"odl and Schacht~\cite[Theorem~2.5]{FRS10}. 
The boundedness condition there implicitly involves a form of the co-degree function combined with probabilities $p(n)$, whereas our theorem treats them separately.   As a consequence, the verification for the (ordered) hypergraphs arising as solutions to the linear equations involving Rado matrices are short and straightforward.
\end{remark}

\begin{remark}
In~\S~\ref{sec:intro} we claimed to have Theorem~\ref{thm:main} reproduce the $1$-statement seen in the Kohayakawa-Kreuter conjecture (see Conjecture~\ref{conj:KK}) which has been recently proved in its full generality by Mousset, Nenadov and Samotij~\cite{MNS18}. This while the condition  $m_2(F_2)>1$ stated in the Kohayakawa-Kreuter conjecture is missing from the premise of Theorem~\ref{thm:main}. As noted in~\cite{KSS14}, in the Kohayakawa-Kreuter conjecture the condition $m_2(F_2) > 1$ is required for the conjectured {\sl threshold} to hold; dropping this condition does not refute the $1$-statement; the latter remains true only not at the optimal density. 
\end{remark}

\section{A generalised Mousset-Nenadov-Samotij type argument}\label{sec:MNS}
In this section we prove Theorem~\ref{thm:main}. The proof is an adaptation of the arguments from~\cite{MNS18} and thus we follow~\cite{MNS18} closely throughout this section.  

Let $(\cH_n^{(i)})_{n\in \Naturals}$ and $\bfH=\left((\cH_n^{(i)})\colon i \in [r]\right)_{n\in\Naturals}$ be as in Theorem~\ref{thm:main}.  We will  be considering sequences of the form $(A_n,\xi_n)_{n \in \Naturals}$ where $A_n \subseteq [n]$ and $\xi_n : [n] \to [k_1]$ is a function viewed as a $k_1$-partition of $A_n=\dcup_{i=1}^{k_1}(\xi^{-1}(i)\cap A_n)$; consequently, we refer to $(A_n,\xi_n)$ as a $k_1$-{\em partite set}. Often we will suppress the index $n$ and treat $(A_n,\xi_n)$ as the pair 
$$
\A := (A \subseteq [n],\xi: [n] \to [k_1]).
$$ 
 Whenever $\xi$ is clear from the context, we identify $\A$ with $A$. 
A set $e \subseteq [n]$ with $|e| =k_1$ is said to be $\xi$-{\em partite} if all its members lie in different parts of $\xi$, i.e.\ $\xi(e)=[k_1]$.

For $i \in [r]$, we denote by $\cH^{(1)}_n [\A]$ the ordered subgraph of $\cH^{(1)}_n$ with edges $\pi$ satisfying  $\pi([k_1])\subseteq A_n$ and $\pi^{-1}=\xi|_{\pi([k_1])}$ (i.e.\ those edges $\pi$ which respect the partition $\xi$), and we refer to such edges as \emph{$\xi$-partite}. Generally, we also denote projections $\pi|_W$ of  $\pi$ to  $W$ as $\xi$-partite if $\pi^{-1}|_{\pi(W)}=\xi|_{\pi(W)}$. 
For $X \subseteq [n]$ we write $E_\xi(\cH^{(1)}_n[X])$ to denote the $\xi$-partite edges of $\cH^{(1)}_n$ spanned by $X$ and $e_\xi(H^{(1)}_n[X])$ to denote the cardinality of this set. We say that $\fH_n[\A]$ is $r$-{\em partite-Ramsey} if for every partition $A_1 \discup A_2\dcup \cdots \dcup A_r$ of $A$ either $e_\xi(\cH^{(1)}_n[A_1]) > 0$ or there exists an $i \in [2,r]$ such that $e(\cH^{(i)}_n[A_i]) > 0$.

Let us assume that $\fH_n[\A]$ is not $r$-partite-Ramsey. Then there exists an $r$-colouring $A_1 \discup A_2 \discup \cdots \discup A_r$ of $A$ such that $e_\xi (\cH^{(1)}_n[A_1]) = 0$ and $e(\cH^{(i)}_n[A_i]) = 0$ for every $i=2$,\ldots, $r$. This in turn implies that there exists an $r$-colouring $A'_1 \discup A'_2 \discup \cdots \discup A'_r$ of $A$ such that any $v \in [n]$ that does not lie in a $\xi$-partite edge of $\cH^{(1)}_n$ satisfies $v \in A'_1$.
 Since $e(\cH^{(i)}_n[A_i]) = 0$, the sets $A'_2$,\ldots, $A'_r$ are independent in the hypergraphs $\cH^{(2)}_n$,\ldots, $\cH^{({r})}_n$ and also in the hypergraphs $H^{(2)}_n$,\ldots, $H^{({r})}_n$ where the order of the vertices in edges is dropped. Moreover, every edge of a $k_i$-uniform $H^{(i)}_n$ gives rise to at most $k_i!$ ordered hyperedges in $\cH^{({i})}_n$, and hence the number of edges between $\cH^{({i})}_n[X]$ and $H^{({i})}_n[X]$ always differs by a constant factor.

The independent sets above can be 
 approximately described  by the following  version of the container theorem due to Saxton and Thomason~\cite{ST15}. The set $\P A$ below denotes the power set of a set $A$,  and $\P(A)^s$ denotes the $s$-fold Cartesian product of $\P A$.  

\begin{theorem}\label{thm:containers} {\em~\cite[Corollary~3.6]{ST15}} Let $H$ be a $k$-uniform hypergraph on $[n]$. Let $0 < \eps,\tau < 1/2$ and let $\tau$ satisfy $\delta(H,\tau) \leq \eps/12k!$.  
Then there exists a constant $c := c(k)=800k!^3k$ and a function $f\colon \P([n])^s\to\P[n]$ where $s\le c\log(1/\eps)$, with the following properties. Let 
$\cT=\{(T_1,\ldots, T_s)\in\cP([n])^s\colon |T_i|\le c\tau n, 1\le i\le s\}$, and let $\cC=\{f(T)\colon T\in \cT\}$. Then
\begin{enumerate}
	\item  [\namedlabel{itm:C1}{(C.1)}] for every independent set $I$ there exists a {\em signature} $T := (T_1,\ldots,T_s) \in \cT\cap\P(I)^s$ such that 
	$
	I \subseteq f(T) \in \C, 
	$
	\item  [\namedlabel{itm:C2}{(C.2)}] $e(H[C]) \leq \eps e(H)$ for all $C \in \C$, 
	\item  [\namedlabel{itm:C3}{(C.3)}] $\log|\C|\le c\log (1/\eps)n\tau\log(1/\tau)$.
\end{enumerate}
\end{theorem}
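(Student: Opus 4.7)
The plan is to prove Theorem~\ref{thm:containers} via the iterative fingerprinting algorithm of Saxton--Thomason. The main idea is to design, for each independent set $I \subseteq [n]$, a deterministic procedure $\mathrm{Alg}(I)$ that reads $I$ and outputs a short fingerprint $T(I) = (T_1,\ldots,T_s) \in \P(I)^s$ with $|T_i| \le c\tau n$, together with a function $f$ depending only on $H$ (and not on $I$) such that $I \subseteq f(T(I))$ and $f(T(I))$ has few edges. The collection $\cC = \{f(T)\colon T \in \cT\}$ is then automatically a cover for the independent sets of $H$, and its cardinality is controlled by the crude count of possible fingerprints.

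First I would build the algorithm. Fix once and for all a canonical total order on $[n]$, selected by a greedy max-degree rule inside the evolving link structures. Maintain an ``available'' set $A \subseteq [n]$, initialised to $[n]$, and empty fingerprints $T_1,\ldots,T_s$ with $s = \Theta(\log(1/\eps))$. At iteration $i$, make a pass through the vertices of $A$ in the canonical order: for each processed vertex $v$, inspect the link of $v$ in $H[A]$ and compare its local co-degree profile against a threshold calibrated by $\tau$. If $v \in I$ and its link is ``too dense'' at some level $j \in [2,k]$, then commit $v$ to $T_i$ and delete from $A$ the vertices that the witnessed density forces outside $I$; otherwise leave $v$ in $A$. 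The critical bookkeeping point is that every deletion is scripted solely by the partial fingerprint $T_1,\ldots,T_{i-1}$ and by $H$ itself, so the final set $A$ is a deterministic function $f$ of $T$ alone.

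Verifying the three conclusions is then relatively mechanical. Property~\ref{itm:C1} is immediate from the construction, since the algorithm only deletes vertices outside $I$ and records vertices of $I$ into the $T_i$. Property~\ref{itm:C3} is a crude count: there are at most $\bigl(\sum_{t \le c\tau n}\binom{n}{t}\bigr)^{s} \le \exp\bigl(c\log(1/\eps)\cdot n\tau\log(1/\tau)\bigr)$ possible signatures. The delicate step is~\ref{itm:C2}: one argues by a potential function (essentially the edge count of $H[A]$) that every pass shrinks $e(H[A])$ by a constant factor, so after $O(\log(1/\eps))$ passes the final $A$ satisfies $e(H[A]) \le \eps e(H)$. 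The per-pass drop is an averaging statement: under the hypothesis $\delta(H,\tau) \le \eps/12 k!$, almost every vertex processed in the pass either sits in a link so sparse that it contributes a negligible fraction of the remaining edges, or else it triggers a committal that destroys a constant proportion of the edges incident to it.

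The hard part will be organising the layered co-degree bookkeeping that gives rise to the specific combination $\delta(H,\tau) = 2^{\binom{k}{2}-1}\sum_{j=2}^{k} 2^{-\binom{j-1}{2}}\delta_j(H,\tau)$. For $k=2$ (graphs) the argument reduces to a single-level iterated deletion, but for $k \ge 3$ the link of $v$ is itself a $(k-1)$-uniform hypergraph, and the density dichotomy must be iterated through all levels $j=2,\ldots,k$. The weights $2^{-\binom{j-1}{2}}$ reflect a union bound over the $j$-subsets of a given edge whose partial co-degrees could otherwise derail the edge-drop estimate, while the leading factor $2^{\binom{k}{2}-1}$ absorbs the total union over levels. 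Once this layered structure is laid out, the explicit constant $c(k) = 800\, k!^{3}k$ emerges from tracking the accumulated losses through the iteration; I do not expect any conceptual novelty beyond the careful bookkeeping.
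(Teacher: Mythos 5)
First, a point of order: the paper does not prove this statement at all --- it is imported verbatim as Corollary~3.6 of Saxton and Thomason~\cite{ST15} --- so there is no internal proof to compare yours against, and any proof you supply must stand entirely on its own. What you have written is an outline of the correct general strategy (the fingerprinting/container algorithm), but it has a genuine gap at its core. The step ``if $v\in I$ and its link is too dense at some level $j$, commit $v$ to $T_i$ and delete from $A$ the vertices that the witnessed density forces outside $I$'' is incoherent for $k\ge 3$: an edge $\{v,u_1,\dots,u_{k-1}\}$ with $v\in I$ only certifies that \emph{some} $u_m$ lies outside $I$, not which one, so a dense link at a committed vertex does not by itself script any deletion. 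This is exactly the difficulty that forces Saxton and Thomason to descend recursively through the uniformities, building for each $j=k,k-1,\dots,1$ an auxiliary $j$-uniform (multi)hypergraph supported on the partial fingerprint; it is the degrees in these auxiliary structures --- not the links of $H[A]$ directly --- that the quantities $\delta_j(H,\tau)$ are designed to control, and your heuristic reading of the weights $2^{-\binom{j-1}{2}}$ as ``a union bound over $j$-subsets of an edge'' does not reconstruct that mechanism.

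Two further points are asserted rather than proved. The ``per-pass constant-factor drop'' in $e(H[A])$ is the main content of the theorem; it is not a simple averaging statement over processed vertices, and nothing in your sketch shows why a vertex that triggers a committal ``destroys a constant proportion of the edges incident to it.'' And you never verify that the committed sets satisfy $|T_i|\le c\tau n$, which is part of conclusion~\ref{itm:C1} (membership of the signature in $\cT$) and in the actual proof follows from the degree bookkeeping you have deferred. The counting in~\ref{itm:C3} and the observation that deletions must be scripted by the partial fingerprint alone (so that $f$ is well defined) are fine as far as they go, but as written the proposal describes where a proof would live rather than giving one; if the intent is merely to use the result, the honest course is to cite~\cite{ST15} as the paper does.
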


We further call the sets from $\C$ \emph{containers}. 

Given $\zeta,\eps, K, \tau$, and $p$ (per the quantification of Theorem~\ref{thm:main}) let $c_i := c_i(k_i)$ and $s_i := s_i(\eps,c_i)$ be the constants guaranteed by Theorem~\ref{thm:containers}, as this will be applied to every member of $(H^{(i)}_n)_{i=2,\ldots, r}$ with $\eps$ and $\tau$. Set  
\begin{equation}\label{eq:const}
c := \max_{i \geq 2} c_i \mand s := \max_{i\geq 2} s_i.
\end{equation}
In addition let $(f_i)_{i\geq 2}$ denote the mappings from signatures to containers as defined in~\ref{itm:C1} for each such application. 

By Theorem~\ref{thm:containers} applied with $\eps$ and $\tau$ to $H^{(i)}_n$, $i \geq 2$, there exists a collection of containers $\C_i \subseteq \P[n]$  such that for each $A'_i$, $i\geq 2$, there exists a signature $T^{(i)} := (T^{(i)}_1,\ldots,T^{(i)}_{s_i}) \in \P(A'_i)^{s_i}$ (per~\ref{itm:C1}).
Given the signatures $(T^{(2)},\ldots,T^{(r)})$ note that $A \setminus (f_2(T^{(2)}) \cup \cdots \cup f_r(T^{(r)})) \subseteq A'_1$ and thus 
$$
e_\xi(\cH^{(1)}_n \big[ A \setminus (f_2(T^{(2)}) \cup \cdots \cup f_r(T^{(r)}))\big])=e_\xi(\cH^{(1)}_n \big[ A'_1\big]) = 0.
$$

The following observation summarises the above discussion. 

\begin{observation}\label{obs:witnesses}
Let $\A : = (A,\xi)$ be a $k_1$-partite set. If $\fH_n[\A]$ is not $r$-partite-Ramsey then, there is a partition of 
$A=A'_1\dcup A'_2\dcup \ldots\dcup A'_r$ (as described above), where the following two properties are met:
\begin{enumerate}
\item [\namedlabel{itm:P1}{(P.1)}] there exists a sequence of signatures $
	(T^{(2)},\ldots,T^{(r)})$ as defined above such that 
	$$
	\bigcup_{i=2}^r \bigcup_{j = 1}^{s_i} T^{(i)}_j \subseteq A,
	$$
and every member of $\bigcup_{i=2}^r\bigcup_{j = 1}^{s_i} T^{(i)}_j$ lies in a $\xi$-partite edge of $\cH^{(1)}_n$. By assumption $(\cH^{(1)}_n)_{n\in\Naturals}$ is $(p,w,\tau)$-properly bounded; thus, for any $v \in \bigcup_{i=2}^r \bigcup_{j = 1}^{s_i} T^{(i)}_j$ an arbitrary $\xi$-partite edge of $\cH^{(1)}_n[A]$ containing $v$ can be fixed, and thus a $w$-minimiser set $W_v \subseteq [k_1]$ satisfying $\xi(v) \in W_v$ can be assigned to $v$. The set $Z_v:=\xi^{-1}(W_v)$ (containing $v$) is viewed as a \emph{witness} for $v$. Define
 \begin{equation}\label{eq:witness-collection}
\tA:=\bigcup_{v\in \bigcup_{i=2}^r \bigcup_{j = 1}^{s_i} T^{(i)}_j } Z_v,
 \end{equation}
and note that 
\begin{equation}\label{eq:tilde-A-bound}
|\tA|\le k_1\cdot r\cdot s\cdot c\cdot \tau n
\end{equation} 
holds.
\item [\namedlabel{itm:P2}{(P.2)}] We have $e_\xi(\cH^{(1)}_n \big[ A \setminus (f_2(T^{(2)}) \cup \cdots \cup f_r(T^{(r)}))\big])=0$.
\end{enumerate}
\end{observation}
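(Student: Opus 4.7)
\noindent\textit{Proof sketch.}
The plan is to unpack the failure of $r$-partite-Ramseyness, doctor the resulting partition so that vertices lying in no $\xi$-partite edge of $\cH^{(1)}_n[A]$ are absorbed into the first colour class, and then feed the remaining classes to Theorem~\ref{thm:containers}.

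First, I would invoke the hypothesis directly: since $\fH_n[\A]$ is not $r$-partite-Ramsey, there is a partition $A=A_1\dcup\cdots\dcup A_r$ with $e_\xi(\cH^{(1)}_n[A_1])=0$ and $e(\cH^{(i)}_n[A_i])=0$ for every $i\ge 2$. I would then perform a single combinatorial adjustment: let $S\subseteq A$ collect those vertices that lie in no $\xi$-partite edge of $\cH^{(1)}_n[A]$, and set $A'_1:=A_1\cup S$ and $A'_i:=A_i\setminus S$ for $i\ge 2$. The equalities $e(\cH^{(i)}_n[A'_i])=0$ persist because removing vertices cannot create edges, while $e_\xi(\cH^{(1)}_n[A'_1])=0$ is preserved because the newcomers from $S$ participate in no $\xi$-partite edge of $\cH^{(1)}_n[A]$ to begin with. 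Crucially, every vertex of $A'_i$ with $i\ge 2$ now belongs to at least one $\xi$-partite edge of $\cH^{(1)}_n[A]$.

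Second, I would apply Theorem~\ref{thm:containers} to each $H^{(i)}_n$ with $i\ge 2$ under parameters $\eps$ and $\tau$; the assumption $\delta(H^{(i)}_n,\tau)\le\eps/12t!$ is exactly what is required. Since $A'_i$ is independent in $H^{(i)}_n$, the theorem yields a signature $T^{(i)}=(T^{(i)}_1,\ldots,T^{(i)}_{s_i})\in\P(A'_i)^{s_i}$ with $|T^{(i)}_j|\le c_i\tau n\le c\tau n$ and $A'_i\subseteq f_i(T^{(i)})$. Property (P.2) then falls out at once: any $v\in A\setminus\bigcup_{i\ge 2}f_i(T^{(i)})$ cannot belong to any $A'_i$ with $i\ge 2$, so $v\in A'_1$, and the desired $e_\xi$-vanishing is inherited from $A'_1$.

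Third, to establish (P.1), each $v\in T^{(i)}_j$ with $i\ge 2$ lies in $A'_i\subseteq A\setminus S$, hence inside at least one $\xi$-partite edge $e_v$ of $\cH^{(1)}_n[A]$; fix one such $e_v$. Proper boundedness of $(\cH^{(1)}_n)_{n\in\NN}$ then supplies a $w$-minimiser $W_v\in\W(w)$ with $\xi(v)\in W_v$, and the witness $Z_v$, read as the restriction of $e_v$ to the $\xi$-classes indexed by $W_v$, contains $v$ and has size $|W_v|\le k_1$. The bound~\eqref{eq:tilde-A-bound} then follows by trivial counting: there are at most $r\cdot s\cdot c\tau n$ vertices $v$ in total, each contributing at most $k_1$ elements to $\tA$. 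The argument is essentially routine bookkeeping; the only mildly creative move is the introduction of $S$ to push $\xi$-isolated vertices into $A'_1$, which is precisely what allows (P.1) and (P.2) to hold simultaneously, so I anticipate no serious obstacle.
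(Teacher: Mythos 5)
Your argument is correct and follows essentially the same route as the paper's own derivation: extract the bad partition, push vertices lying in no $\xi$-partite edge into the first colour class, apply Theorem~\ref{thm:containers} to the unordered hypergraphs $H^{(i)}_n$ for $i\ge 2$ to obtain the signatures, and then read off \ref{itm:P2} from the container containment and \ref{itm:P1} (with the bound~\eqref{eq:tilde-A-bound}) by fixing a $\xi$-partite edge per signature vertex and invoking proper boundedness to select the $w$-minimiser $W_v$. Your explicit set $S$, defined via edges of $\cH^{(1)}_n[A]$ rather than of $\cH^{(1)}_n$, is if anything a slightly cleaner formulation, since it guarantees directly that each witness $Z_v$ lies inside $A$, which is what is needed downstream.
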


Next we define a weighted partite random set, which generalises the model $[n]_p$.

\begin{definition} \label{def:Vnpwk}
Given $n,k \in \Naturals$, $p \in [0,1]$, and $w : [k] \to [1,\infty)$,  the probability distribution $V_{n,p,w,k}$ on $[n]$ is defined as follows. 
\begin{enumerate}
	\item Choose a function $\xi:[n] \to [k]$  uniformly at random. 
	\item An element $x \in [n]$ is included independently in $V_{n,p,w,k}$ with probability $p^{w(\xi(x))}$. 
\end{enumerate}
\end{definition}

From now on we write $V_{n,p,w}$ to denote $V_{n,p,w,k_1}$, where $k_1$ is the uniformity of the hypergraphs in $(\cH^{(1)}_n)_{n\in\Naturals}$. 
We consider two probabilities namely $p\colon\Naturals\to[0,1]$ and $q\colon\Naturals\to[0,1]$, and we 
shall write $p$ and $q$ instead of $p(n)$ and $q(n)$, respectively.  As $V_{n,q,w}$ and  $V_{n,q}$ can be coupled so that  $V_{n,q,w} \subseteq V_{n,q}$ 
the following holds

\begin{equation}\label{eq:reason}
\Pro[\fH_n[V_{n,q}] \; \text{is not $r$-Ramsey}] \leq \Pro [\fH_n[V_{n,q,w}] \; \text{is not $r$-partite-Ramsey}].
\end{equation}

Before proceeding further, we introduce the following quantity $X_I:= |\{\bi \in (\cH^{(1)}_n)_I: \bi \subseteq V_{n,q,w} \}|$ and prove the following fact about it.

\begin{lemma}\label{lem:concentration}
For every $\emptyset \not= I \subseteq [k_1]$, 
$$
\lim_{n\to \infty}\Pro \left[X_I \leq 2 q^{w(I)}|(\cH^{(1)}_n)_I|\right] = 1.
$$
\end{lemma}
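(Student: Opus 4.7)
The plan is a standard second-moment argument, exploiting the $K$-tameness and $(p,w,\tau)$-proper-boundedness of $\cH^{(1)}_n$ to control the variance of $X_I$. Decompose $X_I = \sum_{\bi \in (\cH^{(1)}_n)_I} Y_{\bi}$, where $Y_{\bi}$ indicates that the ordered tuple $\bi$ is $\xi$-partite (i.e.\ $\xi(\bi(i)) = i$ for every $i \in I$) and that its vertex set is contained in $V_{n,q,w}$. Since the $\xi$-assignment and the subsequent inclusion in $V_{n,q,w}$ are independent across distinct vertices, one gets $\Ex[Y_{\bi}] = k_1^{-|I|} q^{w(I)}$, and hence $\Ex[X_I] = k_1^{-|I|} q^{w(I)} |(\cH^{(1)}_n)_I|$. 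Combining proper boundedness with $q \ge Cp$, which is the regime in which the lemma is invoked during the proof of Theorem~\ref{thm:main}, yields $\Ex[X_I] = \Omega(\tau n) \to \infty$.

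For the variance, classify ordered pairs $(\bi, \bi')$ by their exact overlap set $J^* := \{i \in I: \bi(i) = \bi'(i)\} \subseteq I$; $\xi$-partiteness forces any shared vertex to sit at matching indices in both tuples, so $J^*$ captures the full vertex-overlap. Vertex-disjoint pairs ($J^* = \emptyset$) have independent indicators and contribute zero covariance. For each $\emptyset \neq J^* \subseteq I$, a pair with overlap exactly $J^*$ spans $2|I| - |J^*|$ distinct vertices, so $\Ex[Y_{\bi} Y_{\bi'}] = k_1^{-(2|I|-|J^*|)} q^{2w(I)-w(J^*)}$. The $K$-tameness consequence~\eqref{eq:cherry} bounds the number of pairs with $\bi|_{J^*} = \bi'|_{J^*}$ (hence with overlap $\supseteq J^*$) by $\sum_{u \in (\cH^{(1)}_n)_{J^*}} \deg_{(\cH^{(1)}_n)_I}(u)^2 \le K^2 |(\cH^{(1)}_n)_I|^2/|(\cH^{(1)}_n)_{J^*}|$ for $J^* \subsetneq I$, while the diagonal ($J^* = I$) contributes just $|(\cH^{(1)}_n)_I| \cdot k_1^{-|I|} q^{w(I)} = \Ex[X_I]$.

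Dividing through by $\Ex[X_I]^2 = k_1^{-2|I|} q^{2w(I)} |(\cH^{(1)}_n)_I|^2$, each $J^*$-contribution to $\mathrm{Var}(X_I)/\Ex[X_I]^2$ is of order $K^2 k_1^{|J^*|}/(|(\cH^{(1)}_n)_{J^*}|\, q^{w(J^*)})$, while the diagonal piece is $O(1/\Ex[X_I])$. Proper boundedness yields $|(\cH^{(1)}_n)_{J^*}|\, q^{w(J^*)} \ge c' (q/p)^{w(J^*)} \tau n = \Omega(\tau n) \to \infty$, and $\Ex[X_I] \to \infty$ as noted above, so every contribution vanishes. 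Summing the finitely many $J^*$-terms gives $\mathrm{Var}(X_I) = o(\Ex[X_I]^2)$, and Chebyshev's inequality then yields $\Pro[X_I > 2\Ex[X_I]] \to 0$. Since $2\Ex[X_I] \le 2 q^{w(I)} |(\cH^{(1)}_n)_I|$, the lemma follows. The main technical step is the variance bookkeeping: correctly classifying pair overlaps by index subsets $J^* \subseteq I$ and invoking~\eqref{eq:cherry} with the right matching of variables to convert sums of squared projection-degrees into a bound that, via proper boundedness, tends to zero.
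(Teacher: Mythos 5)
Your proof is correct and follows essentially the same route as the paper's: a Chebyshev/second-moment argument in which overlapping pairs are classified by their common projection, the squared projection-degree sums are bounded via \eqref{eq:cherry}, and $(p,w,\tau)$-boundedness (with $q \ge Cp$) makes every error term, as well as the diagonal term $O(1/\Ex X_I)$, vanish. The only difference is cosmetic bookkeeping: you carry the $\xi$-partiteness requirement and the resulting $k_1$-factors explicitly (so your expectation is $k_1^{-|I|}q^{w(I)}|(\cH^{(1)}_n)_I|$ and you note $2\Ex X_I \le 2q^{w(I)}|(\cH^{(1)}_n)_I|$), whereas the paper suppresses these constants and classifies overlaps by $\xi(f\cap f')=J$.
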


\begin{proof}
We start by noting that $\Ex X_I  = q^{w(I)}|(\cH^{(1)}_n)_I|$. Moreover, we may write $X_I = \sum_{f \in (\cH^{(1)}_n)_I} \ind_{f}$ where $\ind_{
f}$ is the indicator variable for whether $f \in (\cH^{(1)}_n)_I$ satisfies $f \subseteq V_{n,q,w}$. By Chebyshev's inequality 
\begin{equation}\label{eq:cheby}
\Pro \left[X_I > 2 q^{w(I)}|(\cH^{(1)}_n)_I| \right] \leq \frac{1}{\Omega(\Ex X_I)} + \frac{\Delta}{\Omega((\Ex X_I)^2)}
\end{equation}
where 
$$
\Delta := \sum_{\substack{f,f' \in (\cH^{(1)}_n)_I \\ f \not= f' \\ f \cap f' \not = \emptyset }}\Ex [\ind_{f} \cdot \ind_{f'}].
$$
For the latter quantity we have 
\begin{align*}
\Delta &\overset{\phantom{\eqref{eq:cherry}}}{\leq} \sum_{\emptyset \not= J \subset I} \sum_{\substack{f,f' \in (\cH^{(1)}_n)_I \\ \xi(f\, \cap\, f') = J}} \Ex [\ind_{f} \cdot \ind_{f'}]\\ 
& \overset{\phantom{\eqref{eq:cherry}}}{\leq} \sum_{\emptyset \not=J} q^{2w(I)-w(J)} \sum_{u \in (\cH^{(1)}_n)_J} \deg_{(\cH^{(1)}_n)_I}(u)^2\\
& \overset{\eqref{eq:cherry}}{\leq} \sum_{\emptyset \not=J} q^{2w(I)-w(J)} K^2 \frac{|(\cH^{(1)}_n)_I|^2}{|(\cH^{(1)}_n)_J|}\\
& \overset{\phantom{\eqref{eq:cherry}}}{=} K^2 q^{2w(I)}|(\cH^{(1)}_n)_I|^2 \sum_{\emptyset \not=J} \left(q^{w(J)}|(\cH^{(1)}_n)_J| \right)^{-1}.
\end{align*}
Substituting this estimate for $\Delta$ in~\eqref{eq:cheby} one arrives at 
\begin{align*}
\Pro \left[X_I > 2 q^{w(I)}|(\cH^{(1)}_n)_I| \right] & \leq \frac{1}{\Omega(q^{w(I)}|(\cH^{(1)}_n)_I|)} + \frac{K^2 q^{2w(I)}|(\cH^{(1)}_n)_I|^2 \sum_{\emptyset \not=J} \left(q^{w(J)}|(\cH^{(1)}_n)_J| \right)^{-1}}{\Omega(q^{2w(I)}|(\cH^{(1)}_n)_I|^2)}\\
& \leq \frac{1}{\Omega(q^{w(I)}|(\cH^{(1)}_n)_I|)} + \sum_{\emptyset \not=J} \frac{O(1)}{q^{w(J)}|(\cH^{(1)}_n)_J|}.
\end{align*}
Both summands in the last estimate vanish owing to~\eqref{eq:bounded} since it guarantees  that $q^{w(X)}|(\cH^{(1)}_n)_X| \to \infty$ for every $\emptyset\neq X \subseteq [k_1]$.
\end{proof}

The role of the set $\A$ from Observation~\ref{obs:witnesses} will be assumed by the set $V_{n,q,w}$, hence $A=V_{n,q,w}$ and $\xi$ is the random function from $V_{n,q,w}$. 
Since the set $\tA\subseteq A$,  we will assume from now on that the set $\tA$ is such that $|(\cH^{(1)}_n[\tA])_I|\le 2 q^{w(I)}|(\cH^{(1)}_n)_I|$ for all $\emptyset \not= I \subseteq [k_1]$. This indeed holds with probability $1-o(1)$, by the Lemma above, and  this fact will be exploited towards the end of the proof.

Now we exploit our  Observation~\ref{obs:witnesses}  as follows:
\begin{align*}
\Pro [\fH_n[V_{n,q,w}] & \; \text{is not $r$-partite-Ramsey}] \\ 
& \le \sum_{(T^{(2)},\ldots,T^{(r)})}\sum_{\tA} \PP[e_\xi(\cH^{(1)}_n \big[ V_{n,q,w}\setminus (f_2(T^{(2)}) \cup \cdots \cup f_r(T^{(r)})) \big]) = 0\text{ and }\tA\subseteq V_{n,q,w}]\\
& \le \sum_{(T^{(2)},\ldots,T^{(r)})}\sum_{\tA} \PP[e_\xi(\cH^{(1)}_n \big[ V_{n,q,w}\setminus (f_2(T^{(2)}) \cup \cdots \cup f_r(T^{(r)})\cup\tA) \big]) = 0\text{ and }\tA\subseteq V_{n,q,w}]\\
&= \sum_{(T^{(2)},\ldots,T^{(r)})}\sum_{\tA} \PP[e_\xi(\cH^{(1)}_n \big[ V_{n,q,w}\setminus (f_2(T^{(2)}) \cup \cdots \cup f_r(T^{(r)})\cup\tA) \big]) = 0]\cdot\PP[\tA\subseteq V_{n,q,w}],
\end{align*}
where the sums are over all possible signatures $(T^{(2)},\ldots,T^{(r)})$ and the sets $\tA$ associated with them, which may arise as described in~\ref{itm:P1}, and $\xi$ is the random partition of $[n]$ from the definition of  $V_{n,q,w}$. 
Therefore, the remainder of the section will be concerned with establishing
\begin{equation}
\sum_{(T^{(2)},\ldots,T^{(r)})}\sum_{\tA} \PP[e_\xi(\cH^{(1)}_n \big[ V_{n,q,w}\setminus (f_2(T^{(2)}) \cup \cdots \cup f_r(T^{(r)})\cup\tA) \big]) = 0]\cdot\PP[\tA\subseteq V_{n,q,w}]
 = o(1),\label{eq:main}
\end{equation}
which would prove Theorem~\ref{thm:main}. 

\begin{lemma}\label{lem:Janson1}
For  any choice of $\tA$ per~\eqref{eq:witness-collection}
\begin{equation}\label{eq:Janson-arg}
\Pro \Bigg[ e_\xi(\cH^{(1)}_n \big[ V_{n,q,w}\setminus (f_2(T^{(2)}) \cup \cdots \cup f_r(T^{(r)})\cup\tA) \big]) = 0\Bigg] \leq 2^{-\Omega\left(\min_{\emptyset\neq I \subseteq [k_1]} q^{w(I)}\left|\left(\cH^{(1)}_n\right)_I\right|\right)}
\end{equation}
holds.
\end{lemma}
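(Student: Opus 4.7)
The plan is to apply Janson's inequality (in its version for decreasing events on independent Bernoulli variables) to the family $\{B_\pi\}$ indexed by $\pi \in E_\xi(\cH^{(1)}_n[U_1\setminus \tA])$, where $B_\pi = \{\pi \subseteq V_{n,q,w}\}$ and $U_1 := [n]\setminus \bigcup_{i=2}^r f_i(T^{(i)})$. Throughout, $\xi$, the signatures $(T^{(2)},\ldots,T^{(r)})$ and the witness set $\tA$ are regarded as fixed; the only remaining randomness is the inclusion of the elements in $V_{n,q,w}$ governed by the weights $w$.

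First I would reduce the problem to $\cH^{(1)}_n$ alone by a Ramsey/container argument. Define the greedy partition $[n]=U_1\sqcup\cdots\sqcup U_r$ with $U_i:=f_i(T^{(i)})\setminus\bigcup_{2\le j<i}U_j$ for $i\ge 2$. Property~\ref{itm:C2} yields $e(H^{(i)}_n[U_i])\le e(H^{(i)}_n[f_i(T^{(i)})])\le \eps\, e(H^{(i)}_n)$ for $i\ge 2$. Since $\zeta > t!\eps \ge \eps$, the $(r,\zeta)$-Ramseyness of $\bfH$ forces the index to be $i=1$, i.e.\ $e(H^{(1)}_n[U_1])>\zeta\, e(H^{(1)}_n)$. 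Converting unordered to ordered edges costs at most a factor of $k_1!\le t!$, and each unordered edge sitting inside the $\xi$-parts of $[k_1]$ corresponds to a $\xi$-partite ordered edge; using that $\cH^{(1)}_n$ is the ordered object we retain $|E_\xi(\cH^{(1)}_n[U_1])|\ge \Omega(|\cH^{(1)}_n|)$. Next, I discard the ordered edges meeting $\tA$: $K$-tameness applied with $I=\{i\}$ gives that every vertex lies in at most $K|\cH^{(1)}_n|/n$ ordered edges, so using~\eqref{eq:tilde-A-bound} the number of ordered edges through $\tA$ is $O(K k_1 r s c\,\tau\, |\cH^{(1)}_n|)$, which is a vanishing fraction once $\tau$ is chosen small enough in terms of $\zeta, \eps, K, r, s, c$. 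Hence $|E_\xi(\cH^{(1)}_n[U_1\setminus \tA])|=\Omega(|\cH^{(1)}_n|)$.

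Now I would set up Janson's inequality. For any $\xi$-partite $\pi$ one has $\Pro[B_\pi]=q^{w([k_1])}$, so
\begin{equation*}
\mu := \sum_{\pi}\Pro[B_\pi] = q^{w([k_1])}\cdot |E_\xi(\cH^{(1)}_n[U_1\setminus \tA])| = \Omega\!\left(q^{w([k_1])}|\cH^{(1)}_n|\right).
\end{equation*}
Grouping pairs $\pi\neq\pi'$ with $\pi\cap\pi'=u\in(\cH^{(1)}_n)_J$ by $\emptyset\neq J\subsetneq [k_1]$ and invoking~\eqref{eq:traditional-boundedness}, I obtain
\begin{equation*}
\Delta := \sum_{\substack{\pi\neq\pi'\\ \pi\cap\pi'\neq\emptyset}}\Pro[B_\pi\cap B_{\pi'}] \;\le\; \sum_{\emptyset\neq J\subsetneq[k_1]} q^{2w([k_1])-w(J)}\!\!\sum_{u\in(\cH^{(1)}_n)_J}\!\!\deg_{\cH^{(1)}_n}(u)^2 \;=\; O\!\left(q^{2w([k_1])}|\cH^{(1)}_n|^2\sum_{J}\frac{1}{q^{w(J)}|(\cH^{(1)}_n)_J|}\right).
\end{equation*}
Therefore $\mu^2/\Delta=\Omega\!\left(\min_J q^{w(J)}|(\cH^{(1)}_n)_J|\right)$ (using that the reciprocal of a constant-size sum of reciprocals is of the order of the minimum), while $\mu$ itself dominates the same quantity by taking $J=[k_1]$. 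Janson's inequality then gives the advertised bound
\begin{equation*}
\Pro\!\left[\bigcap_{\pi}\overline{B_\pi}\right] \le \exp\!\left(-\Omega\!\left(\tfrac{\mu^2}{\mu+\Delta}\right)\right) \le 2^{-\Omega\!\left(\min_{\emptyset\neq I\subseteq[k_1]} q^{w(I)}|(\cH^{(1)}_n)_I|\right)}.
\end{equation*}

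\textbf{Main obstacle.} The step I expect to require the most care is the edge-counting of Step~2: passing between the unordered Ramsey count produced by $\bfH$ and the count of $\xi$-partite ordered edges that Janson actually applies to, and simultaneously absorbing the loss from deleting $\tA$. Both steps are supported by the $K$-tameness hypothesis, which is precisely the right tool to control degrees of projections uniformly, but one must be careful that the constants compose correctly so that $\tau$ (and ultimately $C$) can be chosen to make the final exponent a positive quantity.
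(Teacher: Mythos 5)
There is a genuine gap, and it sits exactly where you fix $\xi$. You declare $\xi$ fixed and then apply plain Janson to the increasing events $\{\pi \subseteq V_{n,q,w}\}$, which forces you to claim the deterministic bound $|E_\xi(\cH^{(1)}_n[U_1\setminus \tA])| = \Omega(|\cH^{(1)}_n|)$. But the $(r,\zeta)$-Ramsey hypothesis only gives a lower bound on the number of \emph{all} ordered edges of $\cH^{(1)}_n$ inside $U_1$, not on the $\xi$-\emph{partite} ones; for a fixed (adversarial or even just unlucky) $\xi$ — say $\xi$ nearly constant on $U_1$ — the $\xi$-partite count can be $0$, so the asserted bound simply does not follow. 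Note also that in the lemma the probability is over $V_{n,q,w}$, which includes the random partition $\xi$, and $\tA$ is itself built from $\xi$-partite edges, so one cannot quietly condition on a worst-case $\xi$. The paper avoids this by keeping $\xi$ random: the per-edge success probability becomes $q^{w([k_1])}/k_1^{k_1}$ (the $k_1^{-k_1}$ accounting for $\xi$-partiteness), so $\mu=\Omega_{\zeta,k_1}(1)\,q^{w([k_1])}e(\cH^{(1)}_n)$ deterministically; and because the events now also depend on the shared randomness of $\xi$ and are no longer increasing events of a product of independent Bernoulli variables, the paper applies Janson's version of Suen's inequality (which needs only the dependency structure, at the price of also controlling the max-degree term $\delta$ via $K$-tameness) rather than plain Janson. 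If you want to salvage your route you would need an additional concentration step showing that, outside an event of probability much smaller than the target bound $2^{-\Omega(\min_I q^{w(I)}|(\cH^{(1)}_n)_I|)}$, the random $\xi$ leaves $\Omega(k_1^{-k_1}|\cH^{(1)}_n|)$ partite edges in $U_1\setminus\tA$; your proposal contains no such step.

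A secondary, fixable error: $K$-tameness with $I=\{i\}$ bounds the number of ordered edges through a vertex by $K|\cH^{(1)}_n|/\min_{i\in[k_1]}|(\cH^{(1)}_n)_{\{i\}}|$, not by $K|\cH^{(1)}_n|/n$ (the denominator $|(\cH^{(1)}_n)_{\{i\}}|$ can be much smaller than $n$). To absorb the edges meeting $\tA$ one must, as in the paper, combine this with $(p,w,\tau)$-boundedness and $w\ge 1$ to get $\min_{\emptyset\neq I}|(\cH^{(1)}_n)_I|=\Omega(\tau n/p)$, so that $|\tA|\cdot K|\cH^{(1)}_n|/\Omega(\tau n/p)=O(p)|\cH^{(1)}_n|\le(\zeta/2)|\cH^{(1)}_n|$ using $p\to 0$. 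Your alternative of "choosing $\tau$ small enough" is not available: $\tau$ is supplied by the boundedness hypothesis (and need not be small compared to $\zeta$), so the slack must come from $p\to 0$, not from $\tau$.
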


\begin{proof} Let  $(T^{(2)},\ldots,T^{(r)})$ be the signatures associated with $\tA$ (as specified in~\ref{itm:P1}). 
Owing to~\ref{itm:C2}, the $r$-partition of $[n]$ given by 
$$
[n] \setminus (f_2(T^{(2)}) \cup \cdots \cup f_r(T^{(r)})), f_2(T^{(2)}), \ldots ,f_r(T^{(r)}),
$$
has the property that $e(H^{(i)}_n[f_i(T^{(i)})]) \leq \eps e(H^{(i)}_n)$ and hence $e(\cH^{(i)}_n[f_i(T^{(i)})]) \leq \eps k_i! e(\cH^{(i)}_n)$  for every $i=2$,\ldots, $r$. 
As  $\fH_n$ is $(r,\zeta)$-Ramsey and $\eps t! < \zeta$, we obtain
$$
e(\cH^{(1)}_n \big[ [n] \setminus (f_2(T^{(2)}) \cup \cdots \cup f_r(T^{(r)}))\big]) \geq \zeta e(\cH^{(1)}_n). 
$$
The assumption that $\cH^{(1)}_n$ is $K$-tamed implies that each member of $\tilde A$ lies in at most $K \frac{|\cH^{(1)}_n|}{\min_{i\in [k_1]} \left|(\cH^{(1)}_n)_{\{i\}}\right|}$ edges of $\cH^{(1)}_n$. Owing to~\eqref{eq:bounded}, upheld by $\cH^{(1)}_n$ by assumption, 
$$
\min_{\emptyset \not= I \subseteq [k_1]} \left|(\cH^{(1)}_n)_I\right| = \Omega(\tau n / p )
$$
holds; where here we used the fact that $w(I) \geq 1$ for every $\emptyset \not= I \subseteq [k_1]$. Recalling that $|\tA|\le k_1\cdot r\cdot s\cdot c\cdot \tau n$, by~\eqref{eq:tilde-A-bound}, it follows that at most
$$
k_1\cdot r\cdot s\cdot c\cdot \tau n \cdot \frac{|\cH^{(1)}_n|}{\Omega(\tau n/p)} \leq (\zeta /2) |\cH^{(1)}_n|   
$$
edges of $\cH^{(1)}_n$ involve $\tA$; where here we use the fact that $p \to 0$. We may then write that
\[
e(\cH^{(1)}_n \big[[n] \setminus (f_2(T^{(2)}) \cup \cdots \cup f_r(T^{(r)}) \cup \tA)\big])\ge \zeta e(\cH^{(1)}_n)-|\tA|\cdot K \frac{|\cH^{(1)}_n|}{\min_{i\in [k_1]} \left|(\cH^{(1)}_n)_{\{i\}}\right|}\ge 
(\zeta/2) e(\cH^{(1)}_n).
\]

For an edge $e \in E(\cH^{(1)}_n)$, we have
$$
\Pro \left[ e \in  E_\xi(\cH^{(1)}_n[V_{n,q,w}]) \right] \geq q^{w([k_1])}/k_1^{k_1}
$$
where $w([k_1]) := \sum_{i \in [k_1]} w(i)$ and the term $k_1^{k_1}$ is incurred by the need for the edge to be $k_1$-partite. Consequently,
$$
\mu:= \Ex \left[ e_\xi(\cH^{(1)}_n \big[ V_{n,q,w}  \setminus (f_2(T^{(2)}) \cup \cdots \cup f_r(T^{(r)}) \cup \tA)\big]) \right] = \Omega_{\zeta,k_1}(1)q^{w([k_1])} e(\cH^{(1)}_n). 
$$
Gearing up towards an application of Suen's inequality (see below), set
$$
\ind_{e} := 
\begin{cases}
1, & e \in E_\xi(\cH^{(1)}_n[V_{n,q,w}]),\\
0, & \text{otherwise}
\end{cases}
$$
and consider the quantities:  
$$
\Delta:=\frac{1}{2}\sum_{\substack{e,f \in E(\cH^{(1)}_n)\\ e\, \cap\, f \not= \emptyset}} \Ex [\ind_{e} \ind_{f}]
\mand 
\delta:=\max_{e\in E(\cH^{(1)}_n)}\sum_{\substack{e,f \in E(\cH^{(1)}_n)\\ e\, \cap\, f \not= \emptyset}} \Ex [\ind_{f}]
$$ 
estimations of which are required for the subsequent application of Suen's inequality.

For $\delta$, the following  upper bound 
$$
\delta\le \max_{\emptyset\neq I\subseteq [k_1]} k_1!\cdot k_1\cdot K q^{w([k_1])}\frac{|\cH^{(1)}_n|}{\left|\left(\cH^{(1)}_n\right)_I\right|}
$$ 
holds; where here we relied on~\eqref{eq:extend}. 
Next, for the correlation term $2\Delta$ we have that 
\begin{align*}
2\Delta & \overset{\phantom{\eqref{eq:traditional-boundedness}}}{:=} \sum_{\substack{e,f \in E(\cH^{(1)}_n)\\ e\, \cap\, f \not= \emptyset}} \Ex [\ind_{e} \ind_{f}] \\
& \overset{\phantom{\eqref{eq:traditional-boundedness}}}{\leq} \sum_{\emptyset\neq I \subseteq [k_1]} \sum_{\substack{u \in \left(\cH^{(1)}_n\right)_I} }\sum_{\substack{e,f \in E(\cH^{(1)}_n) \\ e \, \cap \, f = u}} \Ex[\ind_{e} \ind_{f}] \\
& \overset{\phantom{\eqref{eq:traditional-boundedness}}}{\leq} \sum_{I\neq \emptyset} \sum_{u} \sum_{e,f} q^{2w([k_1])-w(I)} \\ 
& \overset{\phantom{\eqref{eq:traditional-boundedness}}}{\leq} \sum_{I\neq \emptyset} q^{2w([k_1])-w(I)} \sum_{u \in \left(\cH^{(1)}_n\right)_I} \deg_{\cH^{(1)}_n}(u)^2\\
& \overset{\eqref{eq:traditional-boundedness}}{\leq} \sum_{I\neq \emptyset} q^{2w([k_1])-w(I)} K^2 \frac{e(\cH^{(1)}_n)^2}{\left|\left(\cH^{(1)}_n\right)_I\right|} \\
& \overset{\phantom{\eqref{eq:traditional-boundedness}}}{=} O \left( \frac{\mu^2}{\min_{\emptyset\neq I \subseteq [k_1]}q^{w(I)}|\left(\cH^{(1)}_n\right)_I|} \right).
\end{align*}
The claim now follows by Janson's version~\cite{J98} of Suen's inequality:
\[
\PP\left[e_\xi(\cH^{(1)}_n \big[V_{n,q,w}\setminus (f_2(T^{(2)}) \cup \cdots \cup f_r(T^{(r)})\cup\tA)\big])=0 \right]\le \exp\left(-\min\left(\frac{\mu^2}{8\Delta},\frac{\mu}{2},\frac{\mu}{6\delta}\right)\right),
\]
and the estimates on $\mu$, $\Delta$, and $\delta$.
\end{proof}

Equipped with Lemma~\ref{lem:Janson1} we return to~\eqref{eq:main} and upon the appropriate substitution attain   
\begin{align}
\sum_{(T^{(2)},\ldots,T^{(r)})}\sum_{\tA} \PP[e_\xi(\cH^{(1)}_n \big[ V_{n,q,w}\setminus (f_2(T^{(2)}) \cup \cdots \cup f_r(T^{(r)})\cup\tA) \big]) = 0]\cdot\PP[\tA\subseteq V_{n,q,w}]\nonumber\\
\le 2^{-\Omega\left(\min_{\emptyset\neq I \subseteq [k_1]} q^{w(I)}\cH^{(1)}_n[I]\right)}\sum_{(T^{(2)},\ldots,T^{(r)})}\sum_{\tA}\Pro\left[\tA\subseteq V_{n,q,w}\right].\label{eq:centre2} 
\end{align}
 Recall, that  every set  of the form $\tA$ has the property 
 that each of its elements $v$ is covered by some witness set $Z_{u} \in \binom{[n]}{\le k_1}$ (for some element $u$, not necessarily $v$ itself) that is a subset of some partite edge
$e \in E_\xi(\cH^{(1)}_n)$ such that $\xi(Z_{u})$ is a $w$-minimiser (cf.~\ref{itm:P1} of Observation~\ref{obs:witnesses}). 
Since each such $\tA$ arises as the union over all witnesses $Z_{u}$, where $u$ is an element in some of the 
possible tuples of signatures $(T_2,\ldots,T_r)$, we can make the following definition:
\[
\fU_k := \{\tA \colon \text{$\tA$ is $k$-coverable}\},
\]  
where $\tA$ is said to be $k$-{\em coverable} if the least number of sets $Z_v$ required to form  $\tA$ as their union is $k$. By~\ref{itm:P1} each such set $\tA\in\fU_k$ can be covered using at most $r\cdot s\cdot c\cdot \tau n$  sets of the form $Z_v$. 
Consequently, the double sum appearing on the r.h.s. of~\eqref{eq:centre2}
can be estimated as
\begin{equation}\label{eq:centre3}
\sum_{(T^{(2)},\ldots,T^{(r)})}\sum_{\tA}\Pro\left[\tA\subseteq V_{n,q,w}\right] \leq 
(rs)^{k_1\cdot r\cdot s \cdot c \cdot \tau n}\sum_{k=1}^{ r\cdot s \cdot c \cdot \tau n} \sum_{\tA \in \fU_k} \Pro [\tA \subseteq V_{n,q,w}],
\end{equation}
where here the factor $(rs)^{k_1\cdot r\cdot s \cdot c \cdot \tau n}$ accounts for the number of possibilities to reconstruct the signature ensemble $(T^{(2)},\ldots,T^{(r)})$ from a given $\tA$. 
The minimality of $k$ involved in the $k$-coverability of a set $\tA \in \fU_k$ implies  that every set $\tA \in \fU_k$ gives rise to at most $k$ members in $\fU_{k-1}$ which can be attained by simply discarding precisely one of the sets of the form $Z_v$ involved in building $\tA$. That is, there are at most $k$ distinct members $\tA' \in \fU_{k-1}$ such that $\tA = \tA' \cup Z_v$ for some $v\in \tA$. 

Peering closer into this union we write $\tA = \tA' \cup \bi \cup \br$ as to distinguish between the {\sl intersection} $\bi = Z_v \cap \tA'$ and the {\sl remainder} of this set namely $\br$. With this in mind let us recall that $\W:= \W(w)$ was defined to be the set of proper $w$-minimisers and write 
\begin{equation}\label{eq:break}
k \sum_{\tA \in \fU_k} \Pro [\tA \subseteq V_{n,q,w}] \leq 
 \sum_{\tA' \in \fU_{k-1}} \sum_{W \in \W} \sum_{I \subseteq W} \sum_{\substack{\bi \subseteq \tA' \\ \bi \in (\cH^{(1)}_n)_I}} \sum_{\substack{\br \in (\cH^{(1)}_n)_{W \sm I} \\ \bi \cup \br \in (\cH^{(1)}_n)_W}} \Pro [\tA' \cup \br \subseteq V_{n,q,w}].
\end{equation}
 The sums seen on the right hand side of~\eqref{eq:break} are as follows. We consider the generation of all members in $\fU_k$ through the members of $\fU_{k-1}$ via unions of the latter with all possible sets of the form $Z_v$. Given $\tA' \in \fU_{k-1}$ we seek to traverse sets of the form $Z_v$ which extend $\tA'$. As each such set $Z_v$  is associated with $w$-minimiser (through $\xi$), the second sum goes over all possible options for $\xi(Z_v)$. The set $\bi \cup \br$ being this set $Z_v$ is required to be $\xi$-partite and such that $\xi(\bi \cup \br) = W$ (for $W \in  \W$ chosen in the second sum). The third sum ranges over all possible partite representations allowed for $\bi$ to assume. The fourth sum ranges over all subsets of $\tA'$ that may assume the r\^ole of $\bi$. Finally, the fifth sum ranges over all possible completions $\br$. We remind the reader that the notation $\bi \subseteq \tA'$ and $\br \cup \tA'$ means that we may view $\bi$, $\br$ resp., also as sets (by forgetting the order), and that we denote by $\bi \cup \br$  an ordered tuple according to $\xi$.

The events $\{\tA' \subseteq V_{n,q,w}\}$ and $\{\br \subseteq V_{n,q,w} \}$ are independent on account of $\tA'\subset \tA$ and $\br$ being disjoint. Then 
\begin{align}
k \sum_{\tA \in \fU_k} & \Pro [\tA \subseteq V_{n,q,w}] \leq \nonumber \\
& \sum_{\tA' \in \fU_{k-1}} \Pro [\tA' \subseteq V_{n,q,w}]\sum_{W \in \W} \sum_{I \subseteq W} \sum_{\substack{\bi \subseteq \tA' \\ \bi \in (\cH^{(1)}_n)_I}} \sum_{\substack{\br \in (\cH^{(1)}_n)_{W \sm I} \\ \bi \cup \br \in (\cH^{(1)}_n)_W}} \Pro [\br \subseteq V_{n,q,w}] \leq \nonumber\\
& \sum_{\tA' \in \fU_{k-1}} \Pro [\tA' \subseteq V_{n,q,w}]\sum_{W \in \W} \sum_{I \subseteq W}|(\cH^{(1)}_n[\tA])_I| q^{w(W \sm I)} \Delta^{I}((\cH^{(1)}_n)_W) \overset{\eqref{eq:extend}}{\leq} \nonumber \\
&  \sum_{\tA' \in \fU_{k-1}} \Pro [\tA' \subseteq V_{n,q,w}]\sum_{W \in \W} \sum_{I \subseteq W} |(\cH^{(1)}_n[\tA])_I| \, q^{w(W \sm I)} K \frac{|(\cH^{(1)}_n)_W|}{|(\cH^{(1)}_n)_I|};
\label{eq:upper}
\end{align}

An application of Lemma~\ref{lem:concentration} allows us to further estimate~\eqref{eq:upper} by appealing that $|(\cH^{(1)}_n[\tA])_I|\le X_I\le 2 q^{w(I)}|(\cH^{(1)}_n)_I|$ holds with high probability ($1-o(1)$):
\begin{align}
k \sum_{\tA \in \fU_k} \Pro [\tA \subseteq V_{n,q,w}] & \leq \sum_{\tA' \in \fU_{k-1}} \Pro [\tA' \subseteq V_{n,q,w}]\sum_{W \in \W} \sum_{I \subseteq W} 2q^{w(I)}|(\cH^{(1)}_n)_I| q^{w(W \sm I)} K \frac{|(\cH^{(1)}_n)_W|}{|(\cH^{(1)}_n)_I|} \nonumber \\
&  = \sum_{\tA' \in \fU_{k-1}} \Pro [\tA' \subseteq V_{n,q,w}]\sum_{W \in \W} \sum_{I \subseteq W}2 K q^{w(W)}|(\cH^{(1)}_n)_W| \nonumber\\
& \leq \sum_{\tA' \in \fU_{k-1}} \Pro [\tA' \subseteq V_{n,q,w}]\sum_{W \in \W} (2K)^{2|W|} q^{w(W)}|(\cH^{(1)}_n)_W| \label{eq:middle}
\end{align}
Noting that 
$$
\Ex |\fU_k| = \sum_{\tA \in \fU_k} \Pro [\tA \subseteq V_{n,q,w}] \mand \Ex |\fU_{k-1}| = \sum_{\tA' \in \fU_{k-1}} \Pro [\tA' \subseteq V_{n,q,w}]
$$
we may rewrite~\eqref{eq:middle} as
$$
\Ex |\fU_k| \leq \frac{\Ex |\fU_{k-1}| \sum_{W \in \W} (2K)^{2|W|} q^{w(W)}|(\cH^{(1)}_n)_W|}{k}.
$$
Owing to $|\fU_0| = 1$ we may write 
$$
\Ex |\fU_k| \leq \frac{\left(\sum_{W \in \W} (2K)^{2|W|} q^{w(W)}|(\cH^{(1)}_n)_W|\right)^k}{k!}.
$$
This combined with~\eqref{eq:centre3} and~\eqref{eq:break} now yields
$$
\sum_{(T^{(2)},\ldots,T^{(r)})}\sum_{\tA}\Pro\left[\tA\subseteq V_{n,q,w}\right] \leq (rs)^{k_1\cdot r\cdot s \cdot c \cdot \tau n}\sum_{k=1}^{r\cdot s \cdot c \cdot \tau n}  \frac{\left(\sum_{W \in \W} (2K)^{2|W|} q^{w(W)}|(\cH^{(1)}_n)_W|\right)^k}{k!}.
$$
Substituting this into~\eqref{eq:centre2} and using $k!\ge (k/e)^k$ we arrive at
\begin{multline}
\Pro [\fH_n[V_{n,q,w}] \; \text{is not $r$-partite-Ramsey}]\\ 
\overset{\phantom{w \geq 1}}{\leq}  
(rs)^{k_1\cdot r\cdot s \cdot c \cdot \tau n}2^{-\Omega\left(\min_{\emptyset\neq I \subseteq [k_1]} q^{w(I)}|(\cH^{(1)}_n)_I|\right)}  \sum_{k=1}^{r\cdot s \cdot c \cdot \tau n}  \left(\frac{\sum_{W \in \W} e(2K)^{2|W|} q^{w(W)}|(\cH^{(1)}_n)_W|}{k}\right)^k\label{eq:final_R}
\end{multline}
The function $x \mapsto (d/x)^x$ is increasing for $0 < x \leq d/e$. Therefore, the expression in the inner sum is maximised for $k=M$, where 
\[
M:=\min\left\{\sum_{W \in \W} (2K)^{2|W|} q^{w(W)}|(\cH^{(1)}_n)_W|,r\cdot s \cdot c \cdot \tau n\right\}.
\] 
In what follows we replace $q$ with $Cp$ (since the Ramsey property is monotone and $q\ge Cp$) and we also use 
$\min_{\emptyset \not= I \subseteq [k]} p^{w(I)}|(\cH^{(1)}_n)_I| = \Theta(\tau n)$), which leads us to 
\[
\left(\frac{\sum_{W \in \W} e(2K)^{2|W|} q^{w(W)}|(\cH^{(1)}_n)_W|}{M}\right)^M\le \max\left\{e^M, e^{O(\tau n\log C)}\right\}\le e^{O(\tau n\log C)},
\]
where we used $M\le r\cdot s \cdot c \cdot \tau n$ and we hide $s$, $r$, $c$ in the $O(\cdot)$-notation. From this we can bound the right hand side of~\eqref{eq:final_R} 
from above by:
\[
(rs)^{k_1\cdot r\cdot s \cdot c \cdot \tau n}2^{-\Omega\left(\min_{\emptyset\neq I \subseteq [k_1]} q^{w(I)}|(\cH^{(1)}_n)_I|\right)} r\cdot s \cdot c \cdot \tau n\cdot e^{O(\tau n\log C)}.
\]
Appealing to $(p,w,\tau)$-boundedness and settig again $q=Cp$, we may write for $C$ sufficiently large:
\begin{equation*}
 \Pro  [\fH_n[V_{n,q,w}] \; \text{is not $r$-partite-Ramsey}]
\le (rs)^{k_1\cdot r\cdot s \cdot c \cdot \tau n}2^{-C \cdot  \Omega (\tau n)} e^{O(\log(\tau n) + (\log C) \tau n))}=o(1),
\end{equation*}
where we exploted that  $\tau n \to \infty$ due to $(p,w,\tau)$-boundedness.  This completes the proof of Theorem~\ref{thm:main}.

\section{Auxiliary results for partition-regular matrices}\label{sec:aux}

Here we collect some facts about bounds on the number of solutions to the matrical equation $A x = b$, where $A$ is some $\ell\times k$  matrix with integer entries, $x\in\NN^\ell$ and $b\in\NN^k$. We write $\rk A$ to denote the rank of $A$. Further define $\overline{I} : = [k] \sm I$ whenever $I \subseteq [k]$. Recall that $A_I$ denotes the submatrix of $A$, where we only keep columns indexed by $i\in I$. For a given $\ell\times k$-matrix $A$, we write $\cH$ for the solution vectors $x\in[n]^k$ to the equation $Ax=0$. Given  $I\subseteq [k]$, we write  $\cHHI$ for the set of all projections $x_I$, where $x\in[n]^k$ is a solution to $Ax=0$. For $j\in[k]$ we write  $A_j$ to denote the $j$th column of $A$. For $J\subseteq [k]$, let $V(A_J)$ denote the vector space spanned by the columns of $A_J$. 
Given two sets $X$ and $Y$ we write $X^Y$ for the set of functions of the form $Y \to X$. For a set $N\subset \ZZ$, we write 
$A_{J}\cdot N^{J}:=\{\sum_{j\in J} \alpha_j A_{j}\colon \alpha_j\in N\}$. 

\begin{lemma}\label{lem:project}
Let $A$ be an $\ell\times k$ matrix with integer entries and with $\rk A=\ell$. Then there exists a constant $K=K_A>0$ so that
for every $I \subseteq [k]$ we have
\begin{equation}\label{eq:project}
|\cHHI| \leq K n^{|I| - \rk A + \rk A_{\overline{I}}}.
\end{equation}
\end{lemma}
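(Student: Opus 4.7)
The plan is to show that $\cHHI$ is contained in the lattice points of a rational linear subspace of $\Reals^I$, to compute the dimension of this subspace via rank--nullity, and then to bound the number of such lattice points by a single coordinate projection.

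First I would observe that $x_I \in \cHHI$ means there exists $x_{\overline{I}} \in [n]^{\overline{I}}$ with $A_I x_I = -A_{\overline{I}} x_{\overline{I}}$, so in particular $A_I x_I \in V(A_{\overline{I}})$. This gives $\cHHI \subseteq W \cap [n]^I$, where
\[
W := \bigl\{ y \in \Reals^I : A_I y \in V(A_{\overline{I}}) \bigr\}.
\]
Since $A$ has integer entries, $W$ is a rational subspace of $\Reals^I$, and the problem reduces to bounding $|W \cap [n]^I|$.

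The second step is to compute $d := \dim W$ via rank--nullity applied to the linear map $T \colon \Reals^I \to \Reals^\ell / V(A_{\overline{I}})$ given by $T(y) = A_I y + V(A_{\overline{I}})$. Its kernel is exactly $W$, while its image equals $\bigl(V(A_I) + V(A_{\overline{I}})\bigr)/V(A_{\overline{I}})$. Using the assumption $\rk A = \ell$, one has $V(A_I) + V(A_{\overline{I}}) = V(A) = \Reals^\ell$, so the image has dimension $\ell - \rk A_{\overline{I}} = \rk A - \rk A_{\overline{I}}$, and rank--nullity then yields $d = |I| - \rk A + \rk A_{\overline{I}}$.

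Finally, for the lattice-point count, since $W$ is rational I would pick a subset $J \subseteq I$ with $|J| = d$ such that the coordinate projection $\pi_J \colon W \to \Reals^J$ is a linear isomorphism. Restricted to $W \cap [n]^I$ this projection is injective and lands in $[n]^J$, giving $|\cHHI| \le |W \cap [n]^I| \le n^d$, which is~\eqref{eq:project}, in fact with $K=1$; the constant $K=K_A$ in the statement merely leaves room for a less optimal projection. The only step that requires any real attention is the dimension computation---in particular the identity $V(A_I) + V(A_{\overline{I}}) = \Reals^\ell$, which is precisely where the full-rank hypothesis $\rk A = \ell$ enters. Everything else is routine.
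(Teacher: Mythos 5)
Your proof is correct, and it takes a genuinely different (and cleaner) route than the paper's. Both arguments begin with the same observation that any $y\in\cHHI$ satisfies $A_I y\in V(A_{\ovI})$, but the paper then counts in two stages: it conditions on the value $b=A_Iy\in C\cap A_{\ovI}\cdot[n]^{\ovI}$ with $C=V(A_I)\cap V(A_{\ovI})$, bounds the number of $y$ with $A_Iy=-b$ by $n^{|I|-\rk A_I}$, and separately bounds the number of attainable values $b$ by $O_A(n^{\dim C})$ via an orthogonal-projection argument with rational coefficients controlled by the entries of $A$ --- that last step is exactly where the constant $K_A$ comes from. You instead bound $|\cHHI|$ by the number of points of the subspace $W=\{y\in\Reals^I\colon A_Iy\in V(A_{\ovI})\}$ lying in $[n]^I$, compute $\dim W=|I|-\rk A+\rk A_{\ovI}$ by rank--nullity (the same dimension identity the paper packages as $\rk A+\dim C=\rk A_I+\rk A_{\ovI}$), and finish by projecting injectively onto $\dim W$ coordinates, which yields the bound with $K=1$ and no bookkeeping involving the entries of $A$. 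Two minor remarks: neither the rationality of $W$ nor the full-rank hypothesis is actually needed in your argument (an injective coordinate projection exists for any subspace, and the image of your map $T$ has dimension $\rk A-\rk A_{\ovI}$ in general, since $V(A_I)+V(A_{\ovI})=V(A)$), and, like the paper's proof, your argument adapts verbatim to counting solutions in the box $[-n+1,n-1]^{\ovI}$, which is what is later needed in the proof of Lemma~\ref{lem:deg}.
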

\begin{proof}
Set $C:=V(A_{I})\cap V(A_{\overline{I}})$ and observe that 
$\rk A+\dim C=\rk A_{I}+\rk A_{\overline{I}}$ holds. For $y\in\cHHI$ there is an $x\in[n]^k$ with $Ax=0$ and $x_I=y$. Since $A_Iy=-A_{\overline{I}}x_{\overline{I}}$ 
we infer that $A_Iy\in C$. Let $b\in C\cap A_{\overline{I}}\cdot [n]^{\overline{I}}$. Next we estimate the number of solutions $y\in[n]^k$ with $A_Iy=-b$. 
 Let $I_1\subseteq I$ be such that $\rk A_{I_1}=\rk A_I$, hence for any choice of $z\in[n]^{I\setminus I_1}$, there is at most one solution to $A_Iy=-b$ with $y_{I\setminus I_1}=z$ (because $A_{I_1}y_{I_1}=-b-A_{I\setminus I_1}y_{I\setminus I_1}=-b-A_{I\setminus I_1}z$ has at most one solution due to the linear independence of the columns of $A_{I_1}$). 
 Thus, for each $b\in C\cap A_{\overline{I}}\cdot [n]^{\overline{I}}$, there are at most $n^{|I|-|I_1|}$ vectors $y\in\cHHI$ with $A_Iy=-b$. 
 
 Since every solution $x\in[n]^k$ to $Ax=0$ must satisfy $A_{\overline{I}}x_{\overline{I}}\in C$, it remains to estimate $|C\cap A_{\overline{I}}\cdot [n]^{\overline{I}}|$. 
 For every $i\in\ovI$, let $A'_i$ be the orthogonal projection of the column $A_i$ to $C$, i.e.\ $(A_i-A'_i)^T z=0$ for all $z\in C$, and let $A'$ denote the matrix, whose columns are othogonal projections of the columns of $A$ to $C$.  
 If $b\in C\cap A_{\ovI}\cdot [n]^{\ovI}$, then $b$ is a linear integer combination of $A'_i$ with $i\in\ovI$. Let $J\subseteq \ovI$ with $|J|=\dim C$ be such that $A'_j$ with $ j\in J$ form a basis for $C$. Every other $A'_i$ ($i\in \ovI$) is a rational linear combination of $\{A'_j\colon  j\in J\}$ where the coefficients only depend on the entries of the matrix $A$, i.e.\ $A'_i=\sum_{j\in J}A'_j\beta_{ij}$ with $\beta_{ij}=\frac{c}{d}$ with $c$, $d\in \ZZ$ and $|c|$, $|d|\le K'$ for some absolute constant $K'=K'_A$. It follows that $A'_{\ovI}\cdot[n]^{\ovI}\subseteq \{\sum_{j\in J} A'_j (\alpha_j+\sum_{i\in \ovI\setminus J}\alpha_i\beta_{ij})\colon \alpha_i\in [n]\}$, and it is not difficult to see that the number of possible coefficients for every $A'_j$ is at most $2 |\ovI| (K'!)^2 n=O(n)$. It follows that there exists a constant $K=K_A$ (we can take $K$ to be at most $(2 |\ovI| (K'!)^2)^{\dim C}$) with 
 \[
 |\cHHI|\le K n^{|I|-|I_1|+\dim C}= Kn^{|I|-\rk A+\rk A_{\ovI}},
 \]
 where we used $\rk A+\dim C=\rk A_{I}+\rk A_{\overline{I}}$.
\end{proof}

For an $\ell\times k$-matrix $A$, a subset $I\subseteq [k]$ and a vector $y\in [n]^I$, the 
\emph{degree} of $y$ in $H$, i.e., $\deg_\cH(y)$, is given by the number of $z \in [n]^{\overline{I}}$ such that 
\[
A_I y + A_{\overline{I}} z = 0.
\]
Similarly, for $I \subseteq W \subseteq [k]$ and $y\in[n]^I$, we write $\deg_{\cHHW}(y)$ for the number of $y'\in[n]^W$ with $y=y'_I$.

\begin{lemma}\label{lem:deg}
Let $A$ be an $\ell\times k$ matrix with $\rk A=\ell$. 
Let $\emptyset \neq I \subseteq W \subseteq [k]$. Then there exists a constant $K=K_A>0$ so that
\begin{equation}\label{eq:deg}
\deg_{\cHHW}(y) \leq K n^{|W \sm I| - \rk A_{_{\overline{I}}} + \rk A_{_{\overline{W}}}}
\end{equation}
holds for every $y \in \cHHI$. 
\end{lemma}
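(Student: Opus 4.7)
The plan is to mimic the proof of Lemma~\ref{lem:project} but applied to the \emph{inhomogeneous} system obtained by fixing $y$. For $y\in\cHHI$, the quantity $\deg_{\cHHW}(y)$ equals the number of $y''\in[n]^{W\sm I}$ such that there exists $z\in[n]^{\overline{W}}$ with
\[
A_{W\sm I}\,y'' \;+\; A_{\overline{W}}\,z \;=\; -A_I y.
\]
Setting $b:=-A_Iy$, this is the number of elements in the projection onto the $W\sm I$-coordinates of the integer solution set (in the box $[n]^{\overline{I}}$) of the affine linear system $A_{\overline{I}}\,x=b$. Note that because $y\in\cHHI$, there is some $x\in[n]^k$ with $Ax=0$ and $x_I=y$, hence $b=A_{\overline{I}}\,x_{\overline{I}}\in V(A_{\overline{I}})=V(A_{W\sm I})+V(A_{\overline{W}})$. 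Consequently we may decompose $b=b_1+b_2$ with $b_1\in V(A_{W\sm I})$ and $b_2\in V(A_{\overline{W}})$.

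Next I would observe that for any admissible $y''$ there is $z\in\Reals^{\overline{W}}$ with $b-A_{W\sm I}y''=A_{\overline{W}}z$, so
\[
A_{W\sm I}\,y'' \;\in\; (b+V(A_{\overline{W}}))\cap V(A_{W\sm I}) \;=\; b_1+D, \qquad D:=V(A_{W\sm I})\cap V(A_{\overline{W}}),
\]
where we used $b_1\in V(A_{W\sm I})$. The counting then splits into two pieces. First, the number of distinct values of $A_{W\sm I}\,y''$ in $(b_1+D)\cap A_{W\sm I}\cdot[n]^{W\sm I}$ is at most $O_A(n^{\dim D})$, which one obtains by choosing a basis of $D$, expressing it (with bounded denominators, as in Lemma~\ref{lem:project}) in terms of the columns of $A_{W\sm I}$, and noting that elements of $A_{W\sm I}\cdot[n]^{W\sm I}$ have bounded integer coordinates so at most $O(n)$ choices per basis direction appear. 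Second, for each fixed target $v\in V(A_{W\sm I})$, the number of preimages $y''\in[n]^{W\sm I}$ with $A_{W\sm I}\,y''=v$ is at most $n^{|W\sm I|-\rk A_{W\sm I}}$ by selecting $\rk A_{W\sm I}$ linearly independent columns (the remaining $|W\sm I|-\rk A_{W\sm I}$ coordinates of $y''$ can be chosen in $[n]$ freely, and the rest are then forced).

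Multiplying the two bounds gives $\deg_{\cHHW}(y)\le K\,n^{\dim D+|W\sm I|-\rk A_{W\sm I}}$. To finish, I would apply the rank identity
\[
\rk A_{\overline{I}} \;=\; \rk A_{W\sm I}+\rk A_{\overline{W}}-\dim\bigl(V(A_{W\sm I})\cap V(A_{\overline{W}})\bigr) \;=\; \rk A_{W\sm I}+\rk A_{\overline{W}}-\dim D,
\]
which rearranges to $\dim D+|W\sm I|-\rk A_{W\sm I}=|W\sm I|+\rk A_{\overline{W}}-\rk A_{\overline{I}}$, matching~\eqref{eq:deg}.

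The main subtlety, exactly as in the proof of Lemma~\ref{lem:project}, is the counting step bounding $|(b_1+D)\cap A_{W\sm I}\cdot[n]^{W\sm I}|$: one must verify that the extra affine shift by $b_1$ (which depends on $y$ and so is not uniformly bounded a priori) causes no harm. This is handled by noting that only the \emph{differences} of two solutions in the intersection matter, so the problem reduces, after fixing one reference point, to counting lattice points of $A_{W\sm I}\cdot\Integers^{W\sm I}$ inside the linear subspace $D$ within a box of side $O(n)$, which is precisely the situation handled in Lemma~\ref{lem:project}. Everything else is bookkeeping with ranks.
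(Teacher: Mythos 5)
Your proposal is correct and is in essence the paper's own argument: the paper fixes one solution extending $y$ and passes to differences, reducing immediately to counting $W\setminus I$-projections of solutions of the homogeneous system $A_{\overline{I}}z=0$ with $z\in[-n+1,n-1]^{\overline{I}}$, which it handles by the same adaptation of Lemma~\ref{lem:project} that you carry out by hand for the affine system. Your split into coset values of $A_{W\setminus I}y''$ (at most $O_A(n^{\dim D})$ by the bounded-denominator lattice count, with the shift removed by differencing) times preimages per value (at most $n^{|W\setminus I|-\rk A_{W\setminus I}}$), combined with the identity $\rk A_{\overline{I}}=\rk A_{W\setminus I}+\rk A_{\overline{W}}-\dim D$, is exactly the bookkeeping hidden in that adaptation, so the two proofs coincide up to presentation.
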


\begin{proof}
For a given $y\in \cHHI$, we need to estimate the number of projections $x_W$, where $x\in [n]^k$ is a  solution 
to $A_{\ovI}x_{\ovI}=-A_Iy$ and $x_I=y$. Since for two solution vectors $x'$, $x''$ with $x'_I=y=x''_I$ we have $A_{\ovI} (x'_{\ovI}-x''_{\ovI})=0$, we  will instead be interested in estimating 
the number of $z_{W\setminus I}$ so that the vectors $z$ are  solutions to  $A_{\ovI} z=0$ with $z\in[-n+1,n-1]^{\ovI}$ (as this would be an upper bound for $\deg_{\cHHW}(y)$). 
A straightforward adaptation of Lemma~\ref{lem:project} above yields an upper bound of the form 
\[
K n^{|W\setminus I| - \rk A_{\ovI} + \rk A_{([k]\setminus I)\setminus(W\setminus I)}}=
K n^{|W\setminus I| - \rk A_{\ovI} + \rk A_{\overline{W}}}.
\]
\end{proof}

For $\emptyset \not= I \subset W \subseteq [k]$, set $\Delta^{(I)}(\cHHW):= \max \{\deg_{\cH[W]}(\bi) : \bi \in \cHHI\}$.

\begin{lemma}\label{lem:lower}
Let $A$ be an $\ell\times k$ matrix with $\rk A=\ell$. 
If the matrical equation $A x = 0$ has $\Omega(n^{k - \rk A})$ solutions over $[n]^k$ then 
\[
|\cHHI| = \Omega\left(n^{|I|-\rk A + \rk A_{_{\overline{I}}}}\right)
\]
holds for every $I\subseteq [k]$. 
\end{lemma}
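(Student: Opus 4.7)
The plan is to obtain the lower bound via a standard double counting argument, using Lemma~\ref{lem:deg} to bound the degrees from above. Specifically, each element of $\cHHI$ extends to at most a bounded number of full solutions in $\cH$, while the total number of full solutions is known to be large by hypothesis. Dividing these gives the lower bound on $|\cHHI|$.

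In detail, I would first apply Lemma~\ref{lem:deg} with $W = [k]$, so that $\cHHW = \cH$. Since $\overline{W} = \emptyset$ and by convention $\rk A_{\emptyset} = 0$, the lemma yields
\[
\deg_{\cH}(y) \leq K\, n^{|[k]\setminus I| - \rk A_{\overline{I}} + \rk A_{\emptyset}} = K\, n^{k - |I| - \rk A_{\overline{I}}}
\]
for every $y \in \cHHI$. Next I would write the trivial identity
\[
|\cH| = \sum_{y \in \cHHI} \deg_{\cH}(y) \leq |\cHHI|\cdot K\, n^{k - |I| - \rk A_{\overline{I}}}.
\]

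Finally, plugging in the hypothesis $|\cH| = \Omega(n^{k - \rk A})$ and rearranging,
\[
|\cHHI| \;\geq\; \frac{|\cH|}{K\, n^{k - |I| - \rk A_{\overline{I}}}} \;=\; \Omega\!\left( n^{|I| - \rk A + \rk A_{\overline{I}}} \right),
\]
which is the desired bound. Since the argument is purely a double counting combined with the already-established upper bound on degrees, there is no real obstacle; the main care is just in checking the conventions $\overline{[k]} = \emptyset$ and $\rk A_{\emptyset} = 0$ so that Lemma~\ref{lem:deg} specialises to the correct degree bound when $W = [k]$.
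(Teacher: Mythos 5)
Your proof is correct and is essentially the paper's argument: the paper also applies Lemma~\ref{lem:deg} with $W=[k]$ (so $\rk A_{\overline{[k]}}=0$) and bounds $|\cH|\le|\cHHI|\,\Delta^{(I)}(\cH)$, merely phrasing the same double count as a proof by contradiction rather than rearranging directly.
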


\begin{proof}
For suppose that $|\cHHI| = o\left(n^{|I|-\rk A + \rk A_{_{\overline{I}}}}\right)$ for one such $I \subseteq [k]$, then this assumption together with  Lemma~\ref{lem:deg} (applied to $W=[k]$, thus $\rk A_{\overline{[k]}} = 0$)  yield
\[
\Omega(n^{k-\rk A}) = \left|\cH\right| \le |\cHHI|\Delta^{(I)}(\cH) = o (n^{|I| - \rk A + \rk A_{\overline{I}}} \cdot n^{|[k]\sm I| - \rk A_{\overline{I}} + \rk A_{\overline{[k]}}}) = o(n^{k-\rk A}),
\]
a contradiction. 
\end{proof}

\noindent

Recall that a matrix $A$ is \emph{partition-regular} if in any finite coloring of $\NN$ there is a monochromatic solution to $Ax=0$. Frankl, Graham and R\"odl~\cite{FGR88} 
proved the following supersaturation properties of partition-regular matrices. 

\begin{theorem}\label{thm:super} {\em~\cite[Theorem~1]{FGR88}}
Let $A$ be a partition-regular $\ell \times k$ matrix of rank $\ell$ and let $r \in \Naturals$. There exists a $c:=c(r,A)$ such that for any $r$-colouring of $[n]$ with $n$ sufficiently large there exists a colour $i$ in which there are $\geq c n^{k - \ell}$ solutions all coloured $i$. 
\end{theorem}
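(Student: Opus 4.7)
The plan is to deduce the supersaturation bound from Rado's partition regularity theorem itself via a Varnavides-type averaging argument. First, Rado's theorem combined with a standard compactness argument yields a finite $N_0 = N_0(r,A)$ such that every $r$-colouring of $[N_0]$ admits a monochromatic solution to $Ay = 0$; by refining this one may fix a finite list $\mathcal{F} \subseteq [N_0]^k$ of ``template'' solutions with the property that for every $r$-colouring of $[N_0]$ at least one $y \in \mathcal{F}$ is monochromatic.

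Second, I would amplify a single monochromatic solution to many by embedding many affine substructures of $[N_0]$ into $[n]$. The key identities are $A(cy) = cAy$, so that dilations by $c \in \NN$ preserve the solution set, and $A(y + b\mathbf{1}) = Ay + bA\mathbf{1}$, so that translations along $\mathbf{1}$ preserve solutions whenever $A\mathbf{1}=0$. More generally, dilations together with any additive shifts in $\ker A \cap \ZZ^k$ preserve solutions. Let $v_1,\ldots, v_{k-\ell} \in \ker A \cap \ZZ^k$ be a lattice basis; since $\rk A = \ell$ the total number of integer solutions in $[n]^k$ is $\Theta(n^{k-\ell})$ by Lemma~\ref{lem:project} together with Lemma~\ref{lem:lower}. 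Consider the $(k-\ell)$-parameter family of affine maps
\[
\phi_{c, b_1, \ldots, b_{k-\ell-1}}(y) = c y + b_1 v_1 + \cdots + b_{k-\ell-1} v_{k-\ell-1},
\qquad (c,b_1,\ldots,b_{k-\ell-1}) \in [M],
\]
for suitable $M = \Theta(n^{1/(k-\ell)})$ arranged so that each $\phi(\mathcal{F}) \subseteq [n]^k$. Each such map pulls the $r$-colouring $\chi$ of $[n]$ back to an $r$-colouring of $[N_0]$, which by Step~1 renders some $y \in \mathcal{F}$ monochromatic; the image $\phi(y)$ is then a monochromatic solution to $Ax = 0$ in $[n]^k$. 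The number of parameter choices is $\Theta(n^{k-\ell})$, and by linear independence of $v_1, \ldots, v_{k-\ell-1}$ (together with $y$ having at least two distinct coordinates) each monochromatic solution $x \in [n]^k$ is the image of at most $O_A(1)$ parameter tuples. Dividing yields $\Omega(n^{k-\ell})$ monochromatic solutions in total, and a final pigeonholing over the $r$ colours supplies a colour $i \in [r]$ with $\Omega_{r,A}(n^{k-\ell})$ solutions all coloured $i$, establishing the theorem.

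The main obstacle is ensuring that the $(k-\ell)$-parameter affine family above really lies inside the kernel of $A$ for general partition-regular $A$, since such $A$ need not satisfy $A\mathbf{1} = 0$ and thus need not be translation-invariant. This is precisely the role of Rado's columns condition: it organises the columns of $A$ so that one may identify enough directions $v_i$ in $\ker A \cap \ZZ^k$ with non-trivial entries, and, when combined with dilation, to parametrise a $(k-\ell)$-dimensional grid of affine copies of $[N_0]$ with bounded overcount. Controlling this overcount — and equivalently ensuring that distinct parameter tuples produce generically distinct solutions — is the technical heart of the Frankl--Graham--R\"odl argument; once in place, the counting above completes the proof.
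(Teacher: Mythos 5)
The paper does not prove this statement at all: it is imported verbatim from Frankl, Graham and R\"odl~\cite{FGR88}, so your attempt has to stand on its own, and it has a genuine gap at its central step. The amplification step requires that each map $\phi_{c,b_1,\ldots,b_{k-\ell-1}}$ ``pulls the $r$-colouring $\chi$ of $[n]$ back to an $r$-colouring of $[N_0]$''. Such a pullback only makes sense for maps induced coordinatewise by a single ground-set map $\psi\colon[N_0]\to[n]$, i.e.\ $\phi(y)=(\psi(y_1),\ldots,\psi(y_k))$; one then colours $t\in[N_0]$ by $\chi(\psi(t))$, and a monochromatic template goes to a monochromatic solution. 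Your maps are not of this form: adding $b_iv_i$ with $v_i\in\ker A\cap\ZZ^k$ non-constant shifts the $j$-th coordinate by $\sum_i b_i(v_i)_j$, an amount depending on the \emph{position} $j$, so the same element $t\in[N_0]$ occurring in two coordinates is sent to two different integers and has no well-defined pullback colour. If instead you colour pairs $(t,j)$, what you then need is a position-dependent strengthening of Rado's theorem on $[N_0]$, and that statement is false: already for Schur's equation $x_1+x_2-x_3=0$, colouring everything in position $1$ red and everything in position $2$ blue leaves no ``monochromatic'' triple. The only coordinatewise maps preserving solutions of a general homogeneous partition-regular system are the dilations $t\mapsto ct$ (translations $t\mapsto t+b$ need $A\mathbf{1}=0$, which fails e.g.\ for Schur), and dilations alone give only $O(n)$ copies of $[N_0]$, hence only $\Omega(n)$ guaranteed monochromatic solutions --- far short of $\Omega(n^{k-\ell})$. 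Your closing paragraph misdiagnoses the obstacle: the issue is not whether the affine family lies in $\ker A$ (it does, by construction), but that colour constraints cannot be transported along non-constant kernel directions at all, and the columns condition does not repair this. (There is also a minor slip: with all $k-\ell$ parameters ranging over $[M]$, $M=\Theta(n^{1/(k-\ell)})$, you get only $\Theta(n)$ parameter tuples, not $\Theta(n^{k-\ell})$.)

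The arguments that actually work avoid naive affine images of $[N_0]$ altogether. Frankl, Graham and R\"odl~\cite{FGR88} average over genuinely ground-set structures (built from Deuber's $(m,p,c)$-set machinery): these are \emph{sets} of integers, so the colouring of $[n]$ restricts to each copy, a monochromatic solution is found inside every copy, and the supersaturation count follows from counting copies and bounding multiplicities. Alternatively, as in~\cite[Lemma~4.4]{HST19} (quoted elsewhere in this paper), the quantitative Ramsey statement can be deduced from the removal lemma for linear systems of Kr\'al', Serra and Vena~\cite{KSV12}: if every colour class had $o(n^{k-\ell})$ solutions one could delete $o(n)$ elements to destroy all of them, contradicting a robust form of Rado's theorem. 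Either route supplies the mechanism for transporting colour information that your sketch is missing.
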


This Ramsey supersaturation result, Lemma~\ref{lem:lower}, and Lemma~\ref{lem:project} render the following. 

\begin{corollary}\label{cor:project}
Let $A$ be an $\ell \times k$ partition-regular matrix of rank $\ell$. Then for every $I \subseteq [k]$ 
\begin{equation}\label{eq:project}
|\cHHI|= \Theta \left(n^{|I| - \rk A + \rk A_{\overline{I}}}\right).
\end{equation}
\end{corollary}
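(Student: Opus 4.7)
The proof proposal is essentially a one-line combination of the three preceding results, so the plan is simply to explain how they fit together.

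For the upper bound, I would invoke Lemma~\ref{lem:project} directly: since $A$ has rank $\ell$, the lemma immediately gives $|\cHHI| \leq K n^{|I| - \rk A + \rk A_{\overline{I}}}$ for every $I \subseteq [k]$, with $K$ depending only on $A$. This requires no further argument.

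For the matching lower bound, the plan is to verify the hypothesis of Lemma~\ref{lem:lower}, namely that $A x = 0$ has $\Omega(n^{k - \rk A})$ solutions over $[n]^k$. This is where partition-regularity enters: I would apply the Frankl--Graham--R\"odl supersaturation result (Theorem~\ref{thm:super}) with the trivial coloring $r = 1$. In that case the unique colour class is all of $[n]$, and the theorem asserts the existence of $\geq c n^{k - \ell} = \Omega(n^{k - \rk A})$ solutions to $A x = 0$ lying entirely in $[n]^k$. Thus the premise of Lemma~\ref{lem:lower} is fulfilled, and that lemma yields $|\cHHI| = \Omega(n^{|I| - \rk A + \rk A_{\overline{I}}})$ for every $I \subseteq [k]$.

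Combining the two bounds yields $|\cHHI| = \Theta(n^{|I| - \rk A + \rk A_{\overline{I}}})$, completing the proof. There is no real obstacle here; the only subtlety worth noting is that partition-regularity is used only through Theorem~\ref{thm:super}, and even applied with a single colour, so one might wonder whether a direct counting argument would suffice. The Frankl--Graham--R\"odl bound is convenient precisely because it already packages the correct exponent $k - \ell$, avoiding a separate inductive argument on the columns condition.
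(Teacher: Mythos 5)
Your proof is correct and matches the paper's intended argument exactly: the paper derives the corollary by combining Lemma~\ref{lem:project} (upper bound), Theorem~\ref{thm:super} to guarantee $\Omega(n^{k-\rk A})$ solutions in $[n]^k$, and Lemma~\ref{lem:lower} (lower bound for the projections). Your observation that partition-regularity enters only through the supersaturation theorem, applied even with a single colour, is consistent with how the paper uses it.
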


Finally we will be using two further properties of partition-regular matrices, which we collect in the following lemma, see, e.g.,~\cite[Proposition~4.3]{HST19}. 

\begin{lemma}{\cite[Proposition~4.3]{HST19}}
Let $A$ be an $\ell \times k$ irredundant partition-regular matrix and let $I \subseteq [k]$. 
\begin{enumerate}
	\item if $|I| = 1$ then 
	\begin{equation}\label{eq:depend}
	\rk A - \rk A_{\overline{I}} = 0.
	\end{equation}
	
	\item If $|I| \geq 2$ then 
	\begin{equation}\label{eq:luck}
	k-|I| - \rk A_{\overline{I}} \leq k- \rk A-1 - \frac{|I|-1}{m(A)}.
	\end{equation}
    \item 
    	\begin{equation}\label{eq:bound_mA}
	    m(A)>1
	\end{equation}
\end{enumerate}
\end{lemma}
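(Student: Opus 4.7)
My plan is to treat the three parts as separate self-contained arguments, relying only on the definitions of irredundance, Rado's columns condition (which characterises partition-regularity), and the definition of $m(A)$ in~\eqref{eq:mA}. Throughout I shall assume, without loss of generality, that $\rk A = \ell$.

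For part~(1), I would argue by contradiction: if $\rk A > \rk A_{\overline I}$ with $I=\{i\}$ then $A_i \notin V(A_{\overline I})$, so rewriting $A_i x_i = -A_{\overline I} x_{\overline I} \in V(A_{\overline I})$ for any solution $x$ to $Ax=0$ forces $x_i = 0$ in every solution. This precludes the existence of a non-repetitive $\NN$-solution and so contradicts irredundance.

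For part~(2), the essential move is to specialise the maximisation in~\eqref{eq:mA} to $W := I$, which yields
\[
m(A) \;\geq\; \frac{|I|-1}{|I|-1 + \rk A_{\overline I} - \rk A},
\]
and rearrangement of this inequality is exactly~(2). The genuinely nontrivial content is the positivity of the denominator, i.e.\ the strict inequality $\rk A - \rk A_{\overline I} \leq |I|-2$ whenever $|I|\geq 2$. Using the dimension identity $\dim \pi_I(S) = |I| - (\rk A - \rk A_{\overline I})$ (with $S$ the solution space and $\pi_I$ the coordinate projection), this is equivalent to the geometric claim that every projection $\pi_I(S)$ with $|I|\geq 2$ is at least two-dimensional. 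I would prove it by induction on $|I|$: the base case $|I|=2$ rules out the situation in which two coordinates of $S$ are permanently in a fixed ratio, handled by combining part~(1) with the columns condition (splitting into cases according to whether the two indices lie in the same block $C_t$ of the Rado partition $C_1 \dcup \cdots \dcup C_s$); the inductive step proceeds by peeling one coordinate at a time, using that removing one column from $A_{\overline I}$ changes the rank by at most one.

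For part~(3), I would exhibit $W:=[k]$ as a witness. Since $A_{\overline W}$ is empty, the associated ratio becomes $(k-1)/(k-1-\rk A)$, which is strictly greater than $1$ provided $\rk A \geq 1$ (immediate, else $A$ is zero) and $\rk A \leq k-2$. The latter inequality — equivalently $\dim S \geq 2$ — cannot fail under the joint assumption of irredundance and partition-regularity: by the columns condition there is a nonempty $C_1 \subseteq [k]$ with $\sum_{i \in C_1} A_i = 0$, so $\mathbbm{1}_{C_1} \in S$; if $\dim S = 1$, every solution is a scalar multiple of this vector, which either has zero entries (ruling out $\NN$-solutions) or, in the case $C_1 = [k]$, has all entries equal (ruling out non-repetitive solutions). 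The main obstacle throughout is the positivity step in~(2) — the strict gap $\rk A - \rk A_{\overline I} \leq |I|-2$ for $|I|\geq 2$ — as part~(1) supplies only the single-coordinate version, and strengthening it to two coordinates is where the columns condition must genuinely be exploited, since pure linear algebra cannot distinguish $A$ from its extensions by spurious linear relations of the form $x_i = \alpha x_j$.
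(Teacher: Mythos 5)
The paper offers no proof of this lemma at all: it is imported verbatim as \cite[Proposition~4.3]{HST19}, so there is no internal argument to compare against. Judged on its own, your proof is essentially correct and self-contained. Part (1) is fine (note that the conclusion ``$x_i=0$ in every solution'' already contradicts partition-regularity directly, since a monochromatic solution is in particular a solution in positive integers, so you never really need irredundance there). Part (2) is correctly reduced to the positivity claim $\rk A-\rk A_{\overline I}\le |I|-2$, and your induction is sound: the step is exactly the fact that $A_{\overline I}$ is obtained from $A_{\overline{I'}}$ by deleting one column, which lowers the rank by at most one. The base case is the only place where you are schematic, but your plan does go through once one writes down the explicit solutions the columns condition provides: with the Rado partition $C_1\dcup\cdots\dcup C_s$, for every $t$ there is a rational solution $w^{(t)}$ equal to $1$ on $C_t$, supported on $C_1\cup\cdots\cup C_t$ and zero on all later blocks (take $w^{(1)}=\mathbf{1}_{C_1}$, and for $t\ge 2$ use $\sum_{i\in C_t}A_i=\sum_u c_u A_u$ over earlier blocks). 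If every solution satisfied $x_j=\lambda x_i$, then for $i,j$ in the same block $w^{(t)}$ forces $\lambda=1$, contradicting irredundance; for $i\in C_t$, $j\in C_{t'}$ with $t<t'$ the vector $w^{(t)}$ gives $x_i=1$, $x_j=0$, forcing $\lambda=0$ and contradicting part (1), while for $t>t'$ the vector $w^{(t')}$ gives $1=\lambda\cdot 0$, a direct contradiction. Part (3) is also correct: $\mathbf{1}_{C_1}$ shows the solution space is nontrivial, your dichotomy rules out dimension one, and $W=[k]$ then witnesses $m(A)>1$. The only caveat is degenerate input such as the zero matrix (for which $m(A)=1$), which must be tacitly excluded; this is consistent with the paper's convention that Rado matrices have full row rank $\ell\ge 1$.
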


We conclude this section with the following observation regarding the parameter $m(A,B)$ (see Definition~\ref{def:mAB}.  

\begin{observation}\label{obs:const}
Let $A$ and $B$ be two matrices of dimensions $\ell_A \times k_A$ and $\ell_B \times k_B$, respectively. If $m(A) \geq m(B)$ then $m(A,B) \geq m(B)$. 
In particular, $m(A,A)=m(A)$. 
\end{observation}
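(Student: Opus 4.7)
The plan is to reduce both assertions to the following elementary algebraic equivalence valid for $\alpha\ge 0$, $\alpha+\beta>0$ and $t>0$:
\begin{equation*}
\frac{\alpha+1}{\alpha+\beta+1/t}\ge t
\quad\Longleftrightarrow\quad
\alpha\ge t(\alpha+\beta)
\quad\Longleftrightarrow\quad
t\le \frac{\alpha}{\alpha+\beta},
\end{equation*}
together with the $\le$-version obtained by reversing all inequalities. For $W\subseteq[k_A]$ with $|W|\ge 2$, set $\alpha_W:=|W|-1$ and $\beta_W:=\rk(A_{\overline{W}})-\rk A$. Then the $W$-term defining $m(A)$ equals $\alpha_W/(\alpha_W+\beta_W)$, while the $W$-term defining $m(A,B)$ equals $(\alpha_W+1)/(\alpha_W+\beta_W+1/m(B))$. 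Positivity of $\alpha_W+\beta_W$, and hence of the denominator appearing in $m(A,B)$, is a rearrangement of~\eqref{eq:luck}.

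For the general implication, let $W^{*}$ be a maximiser in the definition of $m(A)$, so that $\alpha_{W^{*}}/(\alpha_{W^{*}}+\beta_{W^{*}})=m(A)\ge m(B)$ by hypothesis. The equivalence applied with $t:=m(B)$, $\alpha:=\alpha_{W^*}$ and $\beta:=\beta_{W^*}$ then gives that the $W^{*}$-term in the definition of $m(A,B)$ is at least $m(B)$, and taking the maximum over $W$ yields $m(A,B)\ge m(B)$.

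For the identity $m(A,A)=m(A)$, the inequality $m(A,A)\ge m(A)$ is the specialisation $B=A$ of what has just been shown. For the reverse inequality, apply the $\le$-version of the equivalence for every admissible $W$ with $t:=m(A)$: since $\alpha_W/(\alpha_W+\beta_W)\le m(A)$ by the very definition of $m(A)$, the $W$-term of $m(A,A)$ is bounded above by $m(A)$; taking the maximum over $W$ then yields $m(A,A)\le m(A)$.

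I anticipate no real obstacle here; the only subtlety is ensuring positivity of the denominators so that the equivalence applies, which is immediate from~\eqref{eq:luck} together with~\eqref{eq:bound_mA}.
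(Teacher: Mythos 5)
Your proposal is correct and follows essentially the same route as the paper: both arguments take a maximiser $W^{*}$ of $m(A)$, note that the corresponding term of $m(A,B)$ rearranges (by clearing the denominator, whose positivity comes from~\eqref{eq:luck}) into the inequality $\frac{|W^{*}|-1}{|W^{*}|-\rk A+\rk A_{\overline{W^{*}}}-1}\ge m(B)$, and conclude from $m(A)\ge m(B)$. Your explicit treatment of the reverse bound $m(A,A)\le m(A)$, via the same equivalence applied to every admissible $W$, is a small completeness bonus that the paper leaves implicit, but the underlying argument is identical.
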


\begin{proof}
Let $U \subseteq [k_A]$ with $|U|\ge 2$ be the set defining $m(A)$. It suffices to show that 
\[
\frac{|U|}{|U|-\rk A + \rk A_{\overline{U}} - 1 + 1/m(B)} \geq m(B),
\]
since $\frac{|U|}{|U|-\rk A + \rk A_{\overline{U}} - 1 + 1/m(B)}$ is a lower bound on $m(A,B)$. 
We rewrite the inequality above as 
\[
|U| \geq m(B) (|U|-\rk A + \rk A_{\overline{U}} - 1) +1
\]
and then again as 
\[
\frac{|U|-1}{|U|-\rk A + \rk A_{\overline{U}} - 1} \geq m(B).
\]
Noticing that the l.h.s.\ of the last inequality equals $m(A)$ (by our choice of $U$), 
we  conclude the proof of this observation because $m(A) \geq m(B)$ holds by the initial assumption. 
\end{proof}

\section{Proof of Theorem~\ref{thm:Rado}}\label{sec:Rado}
In this section we deduce Theorem~\ref{thm:Rado} from Theorem~\ref{thm:main}. To that end let $A_1,\ldots,A_r$, $r \geq 1$, be Rado matrices such that 
$
m(A_1) \geq m(A_2) \geq \cdots \geq m(A_r)
$
with $A_i$ having dimensions $\ell_i \times k_i$. 
For $n \in \Naturals$ and $i \in [r]$ define $\cH^{(i)}_n=(H^{(i)}_n,\bpi^{(i)}_n)$ to be the ordered $k_i$-uniform hypergraph whose vertex set is $[n]$ and whose edge set is comprised of all solutions over $[n]$ for the matrical equation $A_i x = 0$ with pairwise distinct entries of the vectors $x$.  The sequences $(\cH^{(i)}_n)_{n \in \Naturals}$ are thus defined as well as the sequence $\bfH=(\fH_n)_{n \in \Naturals}$. Set $p:=p(n):= n^{-1/m(A_1,A_2)}$. 
We  seek to apply Theorem~\ref{thm:main} to the sequences $\bfH$ and $(\cH^{(i)}_n)_{n \in \Naturals}$. Hence we need to verify that these sequences satisfy the premise of Theorem~\ref{thm:main}.
\vspace{2ex}

\TPARA{Ramseyness}
The existence of $\zeta >0$ such that $\fH_n$ is $(r,\zeta)$-Ramsey whenever $n$ is sufficiently large is asserted by~\cite[Lemma~4.4]{HST19} who deduce this from the removal lemma seen at~\cite[Theorem~2]{KSV12}. Somewhat simpler one may set $B:= \mathrm{\mbs{diag}}(A_1,\ldots,A_r)$ which as noted in the Introduction is partition-regular. Then Theorem~\ref{thm:super} yields a constant $c(r,B)$ such that for any $n$ sufficiently large, any $r$-colouring of $[n]$ admits at least $c(r,B) n^{\sum_i k_i - \rk(B)} = c(r,B) n^{\sum_i (k_i -  \rk A_i)}$ monochromatic solutions to the matrical equation $B x = 0$. It follows that $\bfH$ is $(r,c(r,B)/r)$-Ramsey.  

 \TPARA{Tameness of $\cH^{(1)}_n$} 
Fix $\emptyset \not= I \subset W \subseteq [k_1]$. By Corollary~\ref{cor:project}
$$
\frac{\left|\left(\cH^{(1)}_n\right)_W\right|}{\left|\left(\cH^{(1)}_n\right)_I\right|} =\Theta \left( \frac{n^{|W|-\rk A + \rk A_{\overline{W}}}}{n^{|I|-\rk A+\rk A_{\overline{I}}}}\right) = \Theta \left(n^{|W \sm I| -\rk A_{\overline{I}} + \rk A_{\overline{W}}}\right).
$$
By~\eqref{eq:deg} 
$$
\deg_{\cH^{(1)}_n}(y) \leq K n^{|W \sm I| - \rk A_{\overline{I}} + \rk A_{\overline{W}}}
$$
holds for every $y \in \left(\cH^{(1)}_n\right)_I$. 
It follows that $\cH^{(1)}_n$ is $O(1)$-tamed. 
\vspace{2ex}

\TPARA{Containerability for $(H^{(i)}_n)_{i=2}^r$} 
We check that the conditions for `containerability' specified in~\cite[Corollary~3.6]{ST15} (see Theorem~\ref{thm:containers}) are met by $H^{(2)}_n$,\ldots, $H^{(r)}_n$. Pick $\eps < \min\{1/2, c(r,B)/r)$ and set $\tau := C'n^{-1/m(A_2)}$ where $C'$ is some sufficiently large constant. As $\tau = o(1)$, we need to verify for every $i \in [2,r]$ that a sufficiently large $C'$ implies (with our choice of $\tau$):
\begin{equation}\label{eq:tau}
\delta(H^{(i)}_n,\tau) \leq \eps/ 12 (k_i)!.
\end{equation}
To see~\eqref{eq:tau}, fix $i \in [2,r]$ and fix $j \in [2,k_i]$. 
Then (with $\overline{I}=[k_i]\setminus I$):
\begin{align*}
\sum_{v \in [n]} \deg^{(j)}_{H^{(i)}_n}(v) 
& \overset{\phantom{\eqref{eq:luck}}}{\leq} \sum_{v \in [n]}\sum_{I \in \binom{[k_i]}{j}} \Delta^{(I)}(\cH^{(i)}_n)\\
& \overset{\eqref{eq:deg}}{\leq} \sum_{v \in [n]} \sum_{I \in \binom{[k_i]}{j}} K n^{k_i - j - \rk (A_i)_{\overline{I}} + \rk (A_i)_{\overline{[k_i]}}} \\
& \overset{\phantom{\eqref{eq:luck}}}{=} \sum_{v \in [n]} \sum_{I \in \binom{[k_i]}{j}} K n^{k_i - j - \rk (A_i)_{\overline{I}}}\\
& \overset{\eqref{eq:luck}}{\leq} \sum_{v \in [n]} \sum_{I \in \binom{[k_i]}{j}} K n^{k_i - \rk A_i - 1 - \frac{j-1}{m(A_i)}} \\
& \overset{\eqref{eq:project}}{=} O_{k_i} \left( n\cdot  n^{-\frac{j-1}{m(A_i)}} \cdot \frac{e(H^{(i)}_n)}{n}\right).
\end{align*}
Then 
\begin{align*}
\delta_j(H^{(i)}_n,\tau) & \leq \frac{\sum_{v \in [n]} \deg^{(j)}(v)}{\tau^{j-1} \cdot n \cdot \left(\frac{k_i e(H^{(i)}_n)}{n}\right)} \\
& =  \frac{O_{k_i} \left( n^{-\frac{j-1}{m(A_i)}} e(H^{(i)}_n)\right)}{(C')^{j-1} \cdot k_i\cdot n^{-\frac{j-1}{m(A_2)}}\cdot e(H^{(i)}_n)} \\ 
& = \frac{O_{k_i}(n^{-1/m(A_i)})}{(C')^{j-1} \cdot k_i\cdot n^{-1/m(A_2)}};
\intertext{owing to $m(A_2) \geq m(A_i)$ for all $i \in [2,n]$ it follows that $n^{-1/m(A_i)} \leq n^{-1/m(A_2)}$ and thus}
\delta_j(H^{(i)}_n,\tau) & \leq \frac{O_{k_i}(1)}{(C')^{j-1}}.
\end{align*}
Then 
$$
\delta(H^{(i)}_n,\tau) = 2^{\binom{k_i}{2} - 1} \sum_{j=2}^{k_i}2^{-\binom{j-1}{2}} \delta_j(H^{(i)}_n,\tau) \leq 2^{\binom{k_i}{2} - 1} \sum_{j=2}^{k_i}2^{-\binom{j-1}{2}}\frac{O_{k_i}(1)}{(C')^{j-1}};
$$
from which the existence of a choice of $C'$ yielding~\eqref{eq:tau} is clear.

\TPARA{Boundedness of $\cH^{(1)}_n$} 
First we observe that there exists a function $w:[k_1] \to [1,\infty)$ such that for every $x \in [k_1]$ the following is true
\begin{equation}\label{eq:w}
\min \{ |I| - \rk A_1 + \rk (A_1)_{\overline{I}} -  w(I)/m(A_1,A_2): I \subseteq [k_1], x \in I\} = 1 - 1/m(A_2).
\end{equation}
 A similar statement concerning asymmetric graph densities was proven in~\cite[Lemma~8]{MNS18}. 
The proof of~\eqref{eq:w} is almost verbatim as the proof of~\cite[Lemma~8]{MNS18} (which follows by a compactness argument), 
but we provide the details for completeness here.
To see~\eqref{eq:w}, define
$$
r_x(w) : = \min\{|I| - \rk A_1 + \rk (A_1)_{\overline{I}}  - w(I)/m(A_1,A_2): I \subseteq [k_1], x \in I\} - 1 + 1/m(A_2)
$$
whenever $w:[k_1] \to [1,\infty)$ and $x \in [k_1]$. 

It suffices to prove that there exists a function $w$ such that $r_x(w) = 0$ for every $x \in [k_1]$. To that end set
$$
\F := \{w : [k_1] \to [1,\infty): r_x(w) \geq 0, \;\; \forall x \in [k_1]\}. 
$$
Viewed as a subset of $\Reals^{{k_1}}$, the set $\F$ is non-empty and bounded and thus compact. The fact that $\F$ is bounded is simple: it follows from 
$0\le r_x(w)\le \frac{1}{m(A_2)}-\frac{w(x)}{m(A_1,A_2)}$. We focus on the non-emptiness of $\F$. We argue that 
\begin{equation}\label{eq:1-is-in}
w \equiv 1 \in \F.
\end{equation}
To see~\eqref{eq:1-is-in}, recall first that by~\eqref{eq:depend} $\rk A_1 - \rk A_{1_{\overline{I}}} = 0$ whenever $I \subseteq [k_1]$ satisfies $|I| = 1$. Consequently, for $w \equiv 1$ and $I=\{x\}$ we get
$$
1-\rk A_1 + \rk (A_1)_{\overline{I}} -1/m(A_1,A_2)-1+1/m(A_2)\ge 1/m(A_2)-1/m(A_1,A_2)\ge 0,
$$
which follows from $m(A_1) \ge m(A_2)$ and Observation~\ref{obs:const}. If $|I|\ge 2$ and $I\ni x$ then we have from the definition of $m(A_1,A_2)$ that 
$w(I)/m(A_1,A_2)=|I|/m(A_1,A_2)\le |I|-\rk A_1+rk(A_1)_{\overline{I}}-1+1/m(A_2)$, 
and hence
\[
|I| - \rk A_1 + \rk (A_1)_{\overline{I}}  - w(I)/m(A_1,A_2) - 1 + 1/m(A_2)\ge 0.
\]
Thus, in any case we have for all $x\in[k_1]$ that $r_x(1)\ge 0$. This then concludes the proof of~\eqref{eq:1-is-in} so that $\F$ is non-empty.

The function $w \mapsto w([k])$ is continuous over the now known to be compact $\F$. Let $w^*$ be the maximum attained by the function $w \mapsto w([k])$ over $\F$. The function $w^*$ has the property $r_x(w^*) = 0$ for every $x \in [k_1]$. Otherwise, let $x'\in[k_1]$ such that $r_{x'}(w^*)>0$ then set $\tilde{w}(x):=w^*(x)+\eps\cdot 1_{x=x'}$ for a sufficiently small $\eps>0$. This clearly yields a contradiction to the maximality of $w^*$.

 Now we proceed with the verification of the boundedness of $\cH^{(1)}_n$. 
Recall that $\tau := C'n^{-1/m(A_2)}$ so that $\tau n \to \infty$ with $n$,  by~\eqref{eq:bound_mA}. We prove that $\cH^{(1)}_n$ is $(p,w,\tau)$-bounded. This amounts to establishing 
$$
\min_{I \subseteq [k_1]}p^{w(I)}\left|\left(\cH^{(1)}_n\right)_I\right| = \Theta(\tau n)
$$
for all sufficiently large $n$. Owing to Corollary~\ref{cor:project} and since $p=n^{-1/m(A_1,A_2)}$ and $\tau=C'n^{-1/m(A_2)}$ this has the form 
$$
\Theta\left(\min_{\emptyset\neq I \subseteq [k_1]}n^{|I|-\rk A_1 + \rk (A_1)_{\overline{I}}-\frac{w(I)}{m(A_1,A_2)}}\right) = \Theta\left(n^{\min_{\emptyset\neq I \subseteq [k_1]}\left(|I|-\rk A_1 + \rk (A_1)_{\overline{I}}-\frac{w(I)}{m(A_1,A_2)}\right)}\right) = \Theta(n^{1-1/m(A_2)})
$$
where the last equality is owing to~\eqref{eq:w}, i.e., the "definition" of $w$.

\vspace{3ex}
This concludes the proof of Theorem~\ref{thm:Rado}.

\section{Concluding remarks}\label{sec:conclude}
While finalising the writing of this manuscript we were made aware of the work of Zohar~\cite{Zohar} who established a so-called {\sl asymmetric} random van der Waerden theorem as follows. Given integers $\ell_1 \ge ... \ge \ell_r \ge 3$ there exist constants $0 < c <C$ such that 
$$
\lim_{n \to \infty} \Pr([n]_p \to (\ell_1, \dotsc, \ell_r)) = 
\begin{cases}
1, & p \geq C n^{-\frac{\ell_2}{\ell_1(\ell_2-1)}},\\
0, & p  \leq n^{-\frac{\ell_2}{\ell_1(\ell_2-1)}};
\end{cases}
$$
where for $A \subseteq[n]$ we write $A \to (\ell_1, \dotsc, \ell_r)$ to denote that $A$ has the property that for every $r$-colouring of $A$ there is a colour $i \in [r]$ admitting a monochromatic arithmetic progression of length $\ell_i$. While the $1$-statement of the result of Zohar~\cite{Zohar} is a special case of our main result, namely Theorem~\ref{thm:Rado}, the $0$-statement of the result of Zohar~\cite{Zohar} is of course not covered by our result. 
 An extension of the $0$-statement above to more general systems of Rado matrices could lead to the proof of Conjecture~\ref{conj:AHP}.

\bibliographystyle{amsplain_yk}
\bibliography{lit}
\end{document}